\newtheorem{df}{Definition}
\newtheorem{remark}{Remark}
\newtheorem{theorem1}{Theorem}
\newcommand{\R}{\mathbb R}
\newcommand{\cV}{\mathcal{V}}
\newcommand{\ctV}{\widetilde{\cV}}
\newcommand{\cW}{\mathcal{W}}
\newcommand{\cP}{\mathcal{P}}
\newcommand{\cG}{\mathcal{G}}
\newcommand{\cM}{\mathcal{M}}
\newcommand{\cR}{\mathcal{R}}
\newcommand{\cX}{\mathcal{X}}
\newcommand{\cY}{\mathcal{Y}}
\newcommand{\cO}{\mathcal{O}}
\newcommand{\cK}{{\cal K}}
\newcommand{\cZ}{{\cal Z}}
\newcommand{\cH}{{\cal H}}
\newcommand{\bT}{\mathbb {T}}
\newcommand{\cT}{{\cal T}}
\newcommand{\cL}{{\cal L}}
\newcommand{\bM}{\mathbb {M}}
\newcommand{\cU}{{\cal U}}
\newcommand{\cC}{{\cal C}}
\newcommand{\cI}{{\cal I}}
\newcommand{\cB}{{\cal B}}
\newcommand{\cF}{{\cal F}}
\newcommand{\cD}{{\cal D}}
\newcommand{\cE}{{\cal E}}
\newcommand{\cA}{{\cal A}}
\newcommand{\cQ}{{\cal Q}}
\newcommand{\cJ}{{\cal J}}
\newcommand{\IJJ}{I_1 \times \ldots \times I_M \times J_1 \times \ldots \times J_N}
\newcommand{\JJ}{J_1 \times \ldots \times J_N \times J_1 \times \ldots \times J_N}
\newcommand{\JJF}{J_1 \times J_2 \times J_1 \times J_2}
\newcommand{\JKF}{J_1 \times J_2 \times K_1 \times K_2}
\newcommand{\JKm}{J_1 \times J_2 \times K_1 \times mK_2}
\newcommand{\JKmm}{J_1 \times J_2 \times K_1 \times (m+1)K_2}
\newcommand{\KKm}{K_1 \times mK_2 \times K_1 \times mK_2}
\newcommand{\KTKm}{K_1 \times 2mK_2 \times K_1 \times 2mK_2}
\newcommand{\KK}{K_1 \times K_2 \times K_1 \times K_2}
\newcommand{\JK}{J_1 \times \ldots \times J_N \times K_1 \times \ldots \times K_M}
\newcommand{\JI}{J_1 \times \ldots \times J_N \times I_1 \times \ldots \times I_N}
\newcommand{\KKK}{K_1 \times \ldots \times K_M \times K_1 \times \ldots \times K_M}
\title{A model reduction method for large-scale linear multidimensional dynamical systems}
\author{M.A. Hamadi \footnotemark[2] \thanks{Universit\'e Mohammed VI polytechnique, Lot 660, Hay Moulay Rachid, Ben Guerir, 43150 Maroc; amine.hamadi@um6p.ma; ahmed.ratnani@um6p.ma } \and K. Jbilou\footnotemark[1] \thanks{Universit\'e du Littoral C\^ote d'Opale, 50 Rue F. Buisson, BP 699--62228 Calais cedex, France;  jbilou@univ-littoral.fr} \and  A. Ratnani\footnotemark[1]}
\begin{document}
\maketitle
\begin{abstract}
%In this paper, we discuss the use of tensor algebra for multi-linear time invariant (MLTI) systems order reduction, as well as for the solution of some tensor equations. The key tool is the use of tensor Krylov subspace method. The latter is based on the idea of approximating the solution of the system within a low-dimensional subspace, known as the Krylov subspace, generated by applying a few tensor-tensor products. We introduce the classic tensor Krylov subspace via Einstein product for the block and global processes. An extension to the tensor structure of the extended Krylov subspace defined for the matrix case is established. Based on this methods, we give a model reduction process via projection technique, by using the transfer function that describes the input-output behaviour of the MLTI system. Moreover, those Krylov subspace projection methods are of a great tool to solve tensor equations. We show how to extract approximate solutions to a discrete Lyapunov tensor equations via Einstein product by means of the Arnoldi block and global processes using the classic and extended tensor Krylov subspaces.
In this work, we explore the application of multilinear  algebra in reducing the order of multidimentional linear time invariant (MLTI) systems. We use tensor Krylov subspace methods as  key tools, which involve approximating the system solution within a low-dimensional subspace. We introduce  the tensor extended block and global Krylov subspaces and the corresponding Arnoldi based processes. Using these methods, we develop a model reduction using projection techniques. We also show how these methods could be used to solve large-scale Lyapunov tensor equations that are needed in the balanced truncation method which is a technique for order reduction. We demonstrate how to extract approximate solutions  via the Einstein product using the tensor extended block Arnoldi and the extended global Arnoldi processes.
\end{abstract}
% Finally, section 3.4 sums-up the proposed approach
\begin{keywords}
Einstein product, Model recuction, Multi-linear dynamical systems, Tensor Krylov subspaces.
\end{keywords}
  
In model reduction for high multi-linear time invariant (MLTI) systems, multilinear  algebra can be used to simplify the system representation and reduce the computational complexity. This is achieved by decomposing the system tensor into smaller, lower-dimensional tensors, which can be more efficiently processed. One can use tensor decomposition techniques, such as the Tucker, the higher-order singular value decomposition (HOSVD) or  tensor train decomposition (TTD) to decompose the system into simpler components, which can then be used to identify and reduce the number of parameters needed to describe the system. 

Tensors are multi-dimensional arrays that can be used to represent higher-order relationships between variables in a system, making them a powerful tool for system analysis and reduction. The goal of model reduction in Multi-Linear Time Invariant  (MLTI) systems is to find a reduced-order representation that accurately captures the behavior of the original system, while reducing the computational cost. The resulting reduced-order model retains the essential features of the original system and can be used for tasks such as simulation, optimization and control. The use of multilinear  algebra in this context can significantly improve the efficiency and accuracy of the model reduction process. In a various scientific and engineering  applications, the dynamical model over space and time can be described by a systems of PDEs, {\it e.g.,} Navies-Stokes equations in fluid dynamic, Schrodinguer equations in quantum mechanics and heat equations in thermal transfer. Naturally, a spatial/temporal discretization to such equations yields to tensors that evolve in time. A straightforward idea that one can apply, is to transform tensors to matrices or vectors, and then use some linear time invariant approaches as, projection techniques onto Krylov subspaces \cite{houda1,frangos,hamadi,Druskin-on-optimal}, Balanced Truncation (BT) \cite{Antoulas3,stykel,moore} or   use  some non-intrusive methods, such as the Loewner framework, Eigensystem Realization Algorithm (ERA) \cite{era} and Dynamic Mode Decomposition (DMD) \cite{DMD}. The effectiveness of those methods have been clearly proved in a large body of literature. Other materials and details about model order reduction  techniques can be found in \cite{antoulas,guger2,benbook} and the references therein. However the transformation from tensors to vectors leads to  a computational challenge due to high number of states/model parameters of the original system that scales exponentially with the number of dimensions. The use of tensors has become increasingly significant in many control applications, such as state-space representation, system identification and optimal control, where tensors can be used to represent the state of a control system in a compact and efficient manner, making it easier to analyse the system and design control strategies. The main idea is to use multilinear algebra to tensorize the vector-based dynamic system representation into an equivalent tensor representation. This strategy of guarding the compact tensor based representations/computations have been established in many applications, such as linear parameters varying models \cite{LPV}, nonlinear circuits via tensor decomposition \cite{nlcircuit}, solution of Lyapunov equations \cite{Nip} and in simulation of MLTI systems \cite{MLTISim}.

In this paper we are interested in projecting the MLTI system onto some Krylov subspaces of a finite dimension using Einstein product. This technique of projection helps to reduce the complexity of the original system while preserving its essential features, and what we end up with is a lower-dimensional reduced MLTI system that has the same structure as the original one. The MLTI systems have been introduced in \cite{CAN1}, where the states, outputs and inputs are preserved in a tensor format and the dynamic is supposed to be described by some multilinear operators. A technique named tensor unfolding \cite{Brazell} which transforms tensor to matrix, allows to extend many concepts and notions in the LTI theory to the tensor case. One of them is the use of the transfer function that describes the input-output behaviour of the MLTI system, and to quantify the efficiency of the projected reduced MLTI system. Many properties in the theory of LTI systems such as, reachibility, controllability and stability, have been generalized in the case of MLTI systems based via the Einstein product \cite{CAN2}. A variety of applications in model reduction and identification from tensor time series data have been established in \cite{tensortimeserie,tensortimeserie2}. The authors have shown the efficiency behind the compact representation based on tensor algebra. A Balanced Truncation (BT) model reduction for MLTI systems based on tensor structure has been introduced in \cite{CAN2}, where authors employed the tensor train decomposition (TTD) to make computation more feasible. Through the paper, we will be interested in projection  and balanced truncation techniques using  Krylov tensor based subspaces, namely tensor extended global and block Krylov subspace methods using new  Arnoldi-based process. This will be done using the  Einstein product and multilinear algebra  to decompose the system into simpler components via projection.  In the matrix case, the efficiency of the extended Krylov susbpace method has been shown and proved in many fields, in LTI model reduction \cite{Abidi,houda1}, also for solving large scale algebraic matrix equations \cite{hey,heyouni0,simoncini,simon}. We introduce tensor extended  block and global Krylov subspaces and we use them via an Arnoldi process in model reduction for MLTI, also for solving some tensor equations based on Einstein product.

Recently, tensor Krylov subspace methods have gained interest beyond scientific community. These methods are based on the idea of approximating the solution of the system within a low-dimensional subspace, known as the Krylov subspace, generated by applying a few tensor-tensor products. Iterative Krylov subspace methods in tensor structure have shown efficiency and accurate solution of high-dimensional problem. They can be used to solve large multi-linear tensor equations \cite{AlaaGuide,AlaaGuide2} with  applications to color images and video processing, using Einstein and T-products. These methods exploit the fact that the computational cost grows linearly with the size of the problem. Another application to solve multilinear systems based on Einstein product via tensor inversion has been achieved in \cite{Brazell}. In a second part of the paper, we will be interested in some large scale tensor equations such as  the discrete Lyapunov tensor equations  that appears when using  balanced truncation model order reduction methods. We show how to extract approximate solution using tensor-based Krylov subspace methods. 

The paper is organized as follows. In Section \ref{prelem}, we present  general notations and give the main definitions  used throughout the paper. An introduction to the MLTI systems along with a description to the tensor classic and extended global  Krylov subspace based to construct high fidelity reduced MLTI systems is shown in Section \ref{mlti}. The reduction process via projection using tensor Krylov subspaces is established in Section \ref{secreduproj}. A description to the tensor balanced truncation method is presented in Section \ref{BTsec}. Computing approximate solution to large scale discrete Lyapunov tensor equation via the tensor block and global Krylov subspaces is described in Section \ref{disclyap}. Finally, in Section \ref{numerexamp}, some numerical experiments are assessed to show the accuracy of the proposed methods.

\section{Background and definitions}
\label{prelem}
In this section, we provide with some notations and definitions that will be used throughout this paper. Unless otherwise specified, we represent tensors by calligraphic letters, we use lowercase and capital letters to identify vectors and matrices, respectively. An  $N$-mode tensor  is a multidimensional array  and can be represented by $\cX \in\R^{J_1 \times\ldots\times J_N}$, whose elements  are denoted  by $x_{j_1,\ldots,j_N}$ with $1\leq j_i \leq J_i$, $i=1,\ldots, N$. If $N=0$, then $\cX \in \R$ is a $0$th order tensor that we call a scalar, a vector is a $1$th order tensor and a matrix is $2$th order tensor. 
Further definitions and explanations about tensors could be found in \cite{Kolda1}. The tensor Einstein product is a  multidimensional generalizations of matrix product;  for more details see \cite{Brazell,CAN2}
\begin{df}
	Let  $\cA \in \R^{\JK}$ and in $\cB \in \R^{K_1 \times \ldots \times K_M \times I_1 \times \ldots \times I_L}$  be two tensors. The Einstein product $\cA \ast_M \cB \in \R^{J_1 \times \ldots \times J_N\times I_1 \times \ldots \times I_L}$ is defined as follows
	\begin{equation}
		(\cA \ast_M \cB)_{j_1\ldots j_Ni_1\ldots i_L} = \sum_{k_1=1}^{K_1} \sum_{k_2=1}^{K_2} \ldots \sum_{k_M=1}^{K_M} a_{j_1\ldots j_Nk_1\ldots k_M}b_{k_1\ldots k_Mi_1\ldots i_L}.
	\end{equation}
\end{df}
Some special tensor that will be considered in this paper : A  nonzero tensor $\cD=(d_{j_1\ldots j_Ni_1\ldots i_N}) \in \R^{\JJ}$, is called a {\it Diagonal tensor} if all the   entries $d_{j_1\ldots j_Ni_1\ldots i_N}$ are equal to zero expect the diagonal entries denoted by $d_{j_1\ldots j_Nj_1\ldots j_N} \neq 0$. In the case where all the diagonal entries are equal to $1$ ({\it i.e., $d_{j_1\ldots j_Nj_1\ldots j_N}=1$}) then $\cD$ is called the {\it identity} tensor denoted by $\cI$.  Let $\cA \in \R^{\JK}$ and $\cB \in \R^{ K_1 \times \ldots \times K_M\times J_1 \times \ldots \times J_N}$ two tensors verified $b_{k_1\ldots k_Mj_1\ldots j_N}=a_{j_1\ldots j_Nk_1\ldots k_M}$, then $\cB$ is called the {\it transpose} of $\cA$ and denoted by $\cA^T$. The inverse of a square tensor is defined as follows.
\begin{df}
	A square tensor  $\cA  \in \R^{\JJ}$ is invertible iff there exists a tensor $\cX  \in \R^{\JJ}$ such that 
	$$\cA \ast_N \cX = \cX \ast_N \cA = \cI \in \R^{\JJ}.$$
	In that case, $\cX$  is the inverse of $\cA$ and is denoted by $\cA^{-1}.$ 
\end{df}
\begin{proposition}
	Consider $\cA\in \R^{\JK}$ and $\cB\in \small \R^{K_1 \times \ldots \times K_M\times J_1 \times \ldots \times J_N}$. Then we have the following relations 
	\begin{enumerate}
		\item $(\cA \ast_M \cB)^T = \cB^T \ast_N \cA^T.$
		\item $\cI_M \ast_M \cB = \cB$ and $\cB \ast_N \cI_N = \cB$.
	\end{enumerate}
\end{proposition}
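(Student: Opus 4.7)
The plan is to verify both identities entrywise, in direct analogy with the familiar matrix identities for transposition and the identity matrix. Since the two sides of each equation live in spaces of the same shape, it is enough to show that every multi-indexed entry agrees, and the only tool needed is the Einstein product definition combined with the entrywise characterisations of transpose and identity tensors given just above the proposition.

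For the first identity, I would begin with the left-hand side: by definition of the transpose of an $(N{+}L)$-mode tensor,
\[
\bigl((\cA \ast_M \cB)^T\bigr)_{i_1\ldots i_L\, j_1\ldots j_N}
= (\cA \ast_M \cB)_{j_1\ldots j_N\, i_1\ldots i_L}
= \sum_{k_1,\ldots,k_M} a_{j_1\ldots j_N k_1\ldots k_M}\, b_{k_1\ldots k_M i_1\ldots i_L}.
\]
Next I would expand the right-hand side: since $(\cB^T)_{i_1\ldots i_L k_1\ldots k_M}=b_{k_1\ldots k_M i_1\ldots i_L}$ and $(\cA^T)_{k_1\ldots k_M j_1\ldots j_N}=a_{j_1\ldots j_N k_1\ldots k_M}$, the Einstein product produces precisely the same scalar sum, just with the two factors written in the opposite order. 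Comparing the two expressions yields the identity.

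For the second identity, the key observation is that $(\cI_M)_{k_1\ldots k_M k'_1\ldots k'_M}$ is zero unless $k_i=k'_i$ for every $i$, in which case it equals $1$. Plugging this into the Einstein product definition collapses the $M$-fold summation to a single surviving term, recovering the entries of $\cB$ exactly; the right-multiplication $\cB \ast_N \cI_N=\cB$ follows by the same calculation with the roles of the two index blocks swapped, or alternatively by combining the first part of the proposition with the left-identity property already established.

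The main obstacle is purely notational: carefully tracking the four blocks of multi-indices ($J$'s, $K$'s, $I$'s, and the summation indices) without mislabelling them. There is no conceptual subtlety beyond unwinding the definitions, so the proof should reduce to a clean entrywise calculation with no estimates or auxiliary constructions required.
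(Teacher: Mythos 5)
Your entrywise verification is correct and is exactly the intended argument; the paper itself states this proposition without proof, treating it as an immediate consequence of the definitions of the Einstein product, the transpose, and the identity tensor. Both identities reduce to the index bookkeeping you describe, so nothing further is needed.
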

\medskip 
\noindent The {\it trace} denoted by $tr(\cdot)$ of a square-order tensor $\cA\in \R^{\JJ}$ is given by 
\begin{equation}
	\label{traceten}
	tr(\cA)= \sum_{j_1\ldots j_N} a_{j_1\ldots j_Nj_1\ldots j_N}.
\end{equation}
The inner product of the two tensors $\cX, \cY \in \R^{\JK}$ is defined as 
$$<\cX, \cY>=tr(\cX^T\ast_N \cY),$$
where $\cY^T \in \R^{K_1 \times \ldots \times K_M\times J_1 \times \ldots \times J_N}$ is the transpose tensor of $\cY$. If $<\cX, \cY>=0$, we say that $\cX$ and $\cY$ are orthogonal. The corresponding norm is the tensor Frobenius norm given by 
\begin{equation}
	\label{tennorm}
	\|\cX\| = \sqrt{tr(\cX^T \ast_N \cX)}.
\end{equation}

We recall the definition of the eigenvalues of a tensor mentioned in \cite{eigenten,eigeneventen}.
\begin{df}
	\label{defteneigen}
	Let $\cA \in \R^{\JJ}$ be a tensor. The complex scalar $\lambda$ satisfied 
	$$\cA \ast_N \cX = \lambda\cX,$$
	is called an eigenvalue of $\cA$, where $\cX \in \R^{J_1 \times\ldots\times J_N}$ is a nonzero tensor named an eigentensor. The set of all the eigenvalues of $\cA$ is denoted by $\Lambda(\cA)$.
\end{df}
\subsection{Tensor unfolding}
Tensor unfolding plays an important role in tensor computations. It provides simplification that allows to extended many notions to the tensor case that are already presented in the matrix case. We reorganize the entries of the tensor and present them in a vector or in a matrix format. 
\begin{df}
	\label{transphi}
	Consider the following transformation $\Psi : \bT_{j_1\ldots j_Nk_1\ldots k_M} \rightarrow \bM_{|J|,|K|}$ with $\Psi(\cA)=A$ defined component wise as 
	$$\cA_{j_1\ldots j_Nk_1\ldots k_M} \rightarrow A_{i\text{vec}(j,J),i\text{vec}(k,K)},$$
	where we refer to $J=\{J_1,\ldots,J_N\}$, $K=\{K_1,\ldots,K_M\}$ and  $\bT_{j_1\ldots j_Nk_1\ldots k_M}$ is the set of all tensors in $\R^{\JK}$. $ \bM_{|J|,|K|}$ is the set of matrices in $\R^{|J| \times |K|}$ where $|J|=J_1\ldots J_N$ and $|K|=K_1\ldots K_N$. The index mapping $i\text{vec}(\cdot,\cdot)$ introduced in \cite{Ragnarsson} is given by
	$$i\text{vec}(j,J)= j_1+\sum_{l=2}^{N}(j_l-1) \prod_{l=1}^{s-1}J_s.$$
	$$i\text{vec}(k,K)= k_1+\sum_{l=2}^{M}(k_l-1) \prod_{l=1}^{s-1}K_s.$$
\end{df}
\subsection{Block tensors}
\label{blockten}
In this subsection, and analogously to block matrices, we describe briefly how our tensors can be stored properly in a one tensor. To clarify our description we need to start  by giving  the definition of an {\it even-order paired tensor.}
\begin{df}
	A tensor is called even-order paired tensor if its order is always even, {\it i.e.,} ($2N$ for example), and the elements of the tensor are indicating using a pairwise index notation, {\it i.e.,} $a_{j_1i_1\ldots j_Ni_N}$ for $\cA\in \R^{J_1\times I_1 \times \ldots \times J_N\times I_N}$. 
\end{df}
A definition of {\it $n$-mode row block tensor}, of two even-order paired tensor of the same size is given by 
\begin{df}(\cite{CAN2})
	\label{rowblock}
	Consider $\cA,\cB \in \R^{J_1\times I_1 \times \ldots \times J_N\times I_N}$ two even-order paired tensors, the {\it $n$-mode row block tensor} denoted by   $\displaystyle \big\vert \cA \,\, \cB\big\vert_{n}\in \R^{J_1\times I_1 \times\ldots\times J_n\times 2I_n \times \ldots\times J_N\times I_N}$  is defined by the following  \\[0.2cm]
	\begin{equation}
		(\big\vert \cA\, \, \cB\big\vert_n)_{j_1\ldots j_N i_1\ldots i_N} = \left\{\begin{array}{lll}
			a_{j_1\ldots j_N i_1\ldots i_N}, & & j_k=1,\ldots,J_k,\, i_k=1,\ldots,I_k \, \forall k\\
			b_{j_1\ldots j_N i_1\ldots i_N}, & & j_k=1,\ldots,J_k,\, \forall k, \, i_k=1,\ldots,I_k \, \,\text{for}\, \, k\neq n.\\
			& & \text{and} \,\, i_k=I_k+1,\ldots, 2I_k\, \,\text{for}\, \,k=n.
		\end{array}\right.
	\end{equation}
\end{df}
In this work, we are interested in the $4th$-order tensors ({\it i.e., $N=2$}), and not especially paired ones ({\it i.e., the paired wise index notation is not used}). We  have noticed that the definition above stills valid for the case of $4th$-order paired or non-paired tensor. In this regard, we specify more in the following this block tensor notation by giving some examples. We suppose now that $\cA$ and $\cB$ are two $4th$-order tensors in the space $\R^{J_1 \times J_2 \times I_1\times I_2}$. By following the two definitions above, we refer to $1$-mode row block tensor by 
%\begin{itemize}
%	\item [] 
$\big\vert \cA \,\, \cB\big\vert_1 \in \R^{J_1\times 2J_2\times I_1\times I_2}$. We use the MATLAB colon operation $:$ to explain how to extract the two tensors $\cA$ and $\cB$ as follows
$$\big\vert \cA \,\, \cB\big\vert_1(:,1:J_2,:,:) = \cA \quad \text{and} \quad \big\vert \cA \,\, \cB\big\vert_1(:,J_2+1:2J_2,:,:) = \cB.$$
%\item [] 
The notation of $2$-mode row block tensor $\big\vert \cA \,\, \cB\big\vert_2 \in \R^{J_1\times J_2\times I_1\times 2I_2}$ described below is the mostly used in this paper. The extraction goes as 
$$\big\vert \cA \,\, \cB\big\vert_2(:,:,:,1:K_2) = \cA \quad \text{and} \quad \big\vert \cA \,\, \cB\big\vert_2(:,:,:,K_2+1:2K_2) = \cB.$$ 
%\end{itemize}
\begin{remark}
	We notice that if we consider $N=1$ ({\it i.e., $\cA$ and $\cB$ are now matrices, we refer to them as $A$ and $B$}), then the notation $\big\vert \cA \,\, \cB\big\vert_1=\big\vert \cA \,\, \cB\big\vert_2 =[A,B] \in \R^{J_1 \times 2I_1}$ is now defined by the standard block matrices definition.
\end{remark}
\noindent
Similarly to the $1$ or $2$-mode row block tensor, we can define the $1$ or $2$-mode column block tensor for the $4th$-order tensors based on the definition proposed by Can et al., in \cite{CAN1}.
Using Matlab notation, 	$\begin{vmatrix}
	\cA\\
	\cB
\end{vmatrix}_1$ is the  1-mode column block tensor in $\R^{2J_1\times J_2\times I_1\times I_2}$, defined by 
$$\begin{vmatrix}
	\cA\\
	\cB
\end{vmatrix}_1(1:J_1,:,:,:) = \cA \quad \text{and} \quad \begin{vmatrix}
	\cA\\
	\cB
\end{vmatrix}_1(J_1+1:2J_1,:,:,:) = \cB,$$
%\item[] 
and $\begin{vmatrix}
	\cA\\
	\cB
\end{vmatrix}_2$ is 2-mode column block tensor in $\R^{J_1\times J_2\times 2I_1\times I_2}$ given by 
$$\begin{vmatrix}
	\cA\\
	\cB
\end{vmatrix}_1(:,:,1:I_1,:) = \cA \quad \text{and} \quad \begin{vmatrix}
	\cA\\
	\cB
\end{vmatrix}_1(:,:,I_1+1:2I_1,:) = \cB.$$
%\end{itemize}
Based on the Einstein product,  we can easily prove the following proposition that will be mainly used during the computational process described in the following sections.
\begin{proposition}
	Consider four tensors of the same size $\cA,\cB,\cC,\cD \in \R^{K_1 \times K_2\times  K_1\times K_2}$, then we get
	{\small	\begin{equation*}
			\begin{array}{lll}
				\underbrace{\big\vert \cA \,\, \cB\big\vert_2}_{\in \R^{K_1\times K_2\times K_1\times 2K_2}} \ast_2 \underbrace{\big\vert \cC \,\, \cD\big\vert_1}_{\in \R^{K_1\times 2K_2\times K_1\times K_2}} &=&\cA\ast_2 \cC+ \cB\ast_2 \cD \in \R^{K_1 \times K_2\times K_1\times K_2}. \\[0.2cm]
				\underbrace{\big\vert \cA \,\, \cB\big\vert_1}_{\in \R^{K_1\times 2K_2\times K_1\times K_2}} \ast_2 \underbrace{\big\vert \cC \,\, \cD\big\vert_2}_{\in \R^{K_1\times K_2\times K_1\times 2K_2}} &=&
				\Big\vert \big\vert \cA\ast_2 \cC \,\,\, \cA\ast_2 \cD\big\vert_2 ,\,\,\big\vert \cB\ast_2 \cC \,\,\, \cB\ast_2 \cD\big\vert_2\Big\vert_1 \in \R^{K_1 \times 2K_2\times K_1\times 2K_2}.
			\end{array}
	\end{equation*}}
\end{proposition}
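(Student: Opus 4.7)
The plan is to prove both identities by direct component-wise expansion using the definition of the Einstein product $\ast_2$ together with the extraction rules for the 1-mode and 2-mode row block tensors described earlier in Subsection~\ref{blockten}. Since every tensor involved is built by concatenating pieces of size $K_1 \times K_2$ along a single selected dimension, each Einstein sum splits cleanly into two (or four) sub-sums indexed by whether the running index falls in the ``first half'' or ``second half'' of the doubled dimension.

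For the first identity, I would start by writing
\[
\bigl(\vert\cA\,\,\cB\vert_2 \ast_2 \vert\cC\,\,\cD\vert_1\bigr)_{j_1 j_2 i_1 i_2}
= \sum_{k_1=1}^{K_1}\sum_{k_2=1}^{2K_2} \bigl(\vert\cA\,\,\cB\vert_2\bigr)_{j_1 j_2 k_1 k_2}\,\bigl(\vert\cC\,\,\cD\vert_1\bigr)_{k_1 k_2 i_1 i_2},
\]
which is well-defined because the doubled dimensions of the two factors match ($2K_2$ in both cases). I then split the $k_2$ sum at $K_2$. On $k_2\in\{1,\dots,K_2\}$ the extraction rules give $a_{j_1 j_2 k_1 k_2}$ and $c_{k_1 k_2 i_1 i_2}$; on $k_2\in\{K_2{+}1,\dots,2K_2\}$, after the shift $k_2=K_2+k_2'$, they give $b_{j_1 j_2 k_1 k_2'}$ and $d_{k_1 k_2' i_1 i_2}$. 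Each resulting sum is precisely a scalar entry of $\cA\ast_2\cC$ or $\cB\ast_2\cD$, and adding them gives the claim.

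For the second identity, the computation is the same in spirit but now the \emph{uncontracted} indices $j_2$ and $i_2$ are the doubled ones (each running over $2K_2$), while the contracted indices $k_1,k_2$ run only over $K_1$ and $K_2$. I would partition the four combinations of ``first/second half'' for $j_2$ and $i_2$, and in each quadrant substitute the appropriate entries of $\cA,\cB,\cC,\cD$ using the extraction formulas, obtaining
\[
\bigl(\vert\cA\,\,\cB\vert_1\ast_2\vert\cC\,\,\cD\vert_2\bigr)_{j_1 j_2 i_1 i_2}
\in\bigl\{(\cA\ast_2\cC)_{\cdots},\,(\cA\ast_2\cD)_{\cdots},\,(\cB\ast_2\cC)_{\cdots},\,(\cB\ast_2\cD)_{\cdots}\bigr\}
\]
depending on which quadrant $(j_2,i_2)$ lies in. Finally I would verify that this quadrant pattern coincides with the nested block notation by unfolding the outer $\vert\,\cdot\,\vert_1$ (splitting along $j_2$) and each inner $\vert\,\cdot\,\vert_2$ (splitting along $i_2$) and matching the pieces.

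The only real obstacle is the bookkeeping in the second identity: one must keep straight which index is being doubled in each block operation, since $\vert\,\cdot\,\vert_1$ acts on the second dimension while $\vert\,\cdot\,\vert_2$ acts on the fourth. A clean way to handle this is to introduce the shifted indices $j_2',i_2'\in\{1,\dots,K_2\}$ explicitly and tabulate the four cases, so that the final identification with $\bigl\vert\vert\cA\ast_2\cC,\,\cA\ast_2\cD\vert_2,\,\vert\cB\ast_2\cC,\,\cB\ast_2\cD\vert_2\bigr\vert_1$ becomes a direct reading of the extraction rules.
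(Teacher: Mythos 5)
Your component-wise expansion is correct and is exactly the argument the paper has in mind: the paper states this proposition without proof, remarking only that it follows easily from the definition of the Einstein product, and your splitting of the contracted index at $K_2$ for the first identity and the four-quadrant case analysis of the free indices $(j_2,i_2)$ for the second identity is the natural way to carry that out. No gaps; the only care needed is the bookkeeping you already flag, namely that $\vert\cdot\vert_1$ doubles the second mode while $\vert\cdot\vert_2$ doubles the fourth.
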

Let us consider that we have a $m$ tensors $\cA_k \in \R^{J_1 \times J_2 \times I_1\times I_2}$, we can apply the definition associated to the $2$-mode row block tensor in succession to create a one tensor denoted by $\cA$ in the space $\R^{J_1 \times J_2 \times I_1\times mI_2}$ and has the following form
\begin{equation}
	\label{tenblsucc}
	\cA =\big\vert \cA_1 \quad \cA_2 \quad \ldots \quad \cA_m\big\vert \in \R^{J_1 \times J_2 \times I_1\times mI_2}. 
\end{equation}
For more general definition we refer the reader to [\cite{CAN2}, Definition $4.2$]. The process to create such tensor is defined as follows
\begin{itemize}
	\item [-] If $m$ is {\it even} then we consider the following
	$$\cX_{12} = \underbrace{\big\vert \cA_1 \quad \cA_2\big\vert_2}_{\in \R^{J_1\times J_2\times I_1\times 2I_2}} , \quad \cX_{34} = \underbrace{\big\vert \cA_3 \quad \cA_4\big\vert_2}_{\in \R^{J_1\times J_2\times I_1\times 2I_2}}, \quad \ldots, \quad \cX_{m-1m} = \underbrace{\big\vert \cA_{m-1} \quad \cA_m\big\vert_2}_{\in \R^{J_1\times J_2\times I_1\times 2I_2}},$$
	then we stored them two by two as follows
	$$\cX_{1234} = \underbrace{\big\vert \cX_{12} \quad \cX_{34}\big\vert_2}_{\in \R^{J_1\times J_2\times I_1\times 4I_2}},  \quad \ldots, \quad \cX_{m-3m-2m-1m} = \underbrace{\big\vert \cX_{m-3m-2} \quad \cX_{m-1m}\big\vert_2}_{\in \R^{J_1\times J_2\times I_1\times 4I_2}},$$
	We keep repeating the process until the last 2-mode row block tensor obtained and simply we presented in the following notation
	\begin{equation}
		\cA =\big\vert \cA_1 \quad \cA_2 \quad \ldots \quad \cA_m\big\vert \in \R^{J_1 \times J_2 \times I_1\times mI_2}.
	\end{equation}  
	\item [-] If $m$ is {\it odd}, then we first choose $m-1$ tensors and applied the process described above. We denote by $\cX$ the obtained tensor in $\R^{J_1 \times J_2 \times I_1\times (m-1)I_2}$ and then we obtain the desired tensor $\cA_m$ given by 
	$$\cA = \big\vert \cX \quad \cA_m \big\vert\in \R^{J_1 \times J_2 \times I_1\times mI_2}.$$ 
	An explanation of the MATLAB colon $:$ for the constructed tensor $\cA$ goes as follows
	$$ \cA_l=\cA(:,:,:,(l-1)I_2+1:lI_2) \quad \text{for} \,\, l=1,\ldots,m.$$
\end{itemize}
In Sections \ref{mlti} and \ref{disclyap},  we will deal with some tensors in $\R^{\KKm}$. Let us denote this tensor $\cM_m$, and suppose that $m=3$ ({\it i.e., $\cM_3 \in \R^{K_1\times 3K_2\times K_1\times 3K_2}$}), by following the definitions above, the tensor $\cM_3$ can be presented as follows
\begin{equation}
	\cM_3 = \bigg\vert \,\Big\vert\,\big\vert\cM_{1,1} \quad\cM_{1,2}\big\vert_2\quad \cM_{1,3}\Big\vert_2,\,\, \Big\vert\, \big\vert\cM_{2,1} \quad\cM_{2,2}\big\vert_2\quad \cM_{2,3}\Big\vert_2,\, \Big\vert\, \big\vert \cM_{3,1} \quad\cM_{3,2}\Big\vert_2\quad \cM_{3,3}\Big\vert_2\,\,\bigg\vert_1,
\end{equation}
where $\cM_{i,j} \in \R^{\KK}$ and the three $2$-mode row block tensor given above have been constructed as mentioned in (\ref{tenblsucc}), where we concatenate every tensor $\cM_{i,j}$ successively. To facilitate the notation, we will refer to $\cM_m$ as 
\begin{equation}
	\cM_3=	\begin{vmatrix}
		\cM_{1,2}	& \cM_{1,2} & \cM_{1,3} \\
		\cM_{2,1}	&  \cM_{2,2}& \cM_{2,3} \\
		\cM_{3,1}	& \cM_{3,2} &\cM_{3,3} 
	\end{vmatrix} \in \R^{K_1\times 3K_2\times K_1\times 3K_2}.
\end{equation}
The tensors $\cM_{i,j}$ could be defined from the tensor $\cM_m$ by using the MATLAB colon as follows
$$ \cM_{i,j}=\cM_3(:,(i-1)K_2+1:iK_2,:,(j-1)K_2+1:jK_2) \quad \text{for} \,\, i,j=1,2,3.$$
This could be followed by a generalisation to $m \in \mathbb{N}$ ({\it i.e.,} $\cM_m \in \R^{K_1\times mK_2\times K_1\times mK_2}$) defined as follows
\begin{equation}
	\label{Hmtensor}
	\cM_m=\begin{vmatrix}
		\cM_{1,1}	& \cM_{1,2} & \ldots  & \cM_{1,m} \\
		\cM_{2,1}	& \cM_{2,2} &\ldots  &\cM_{2,m}  \\
		\vdots	&  \ddots&\ddots  &\vdots  \\
		\cM_{m,1}	& \cM_{m,2}  &\ldots  &\cM_{m,m} 
	\end{vmatrix} \in \R^{K_1\times mK_2\times K_1\times mK_2},
\end{equation}
and in the same way, we can describe every tensor $\cM_{i,j}$ using the MATLAB colon by the following 
$$\cM_{i,j} =\cM_m(:,(i-1)K_2+1:iK_2,:,(j-1)K_2+1:jK_2) \in \R^{\KK} \quad \text{for} \,\, i,j=1,2, \ldots, m.$$
For a simple presentation, we suppose that $m=2$ and $\cA,\cB$ are two tensors in $\R^{\KK}$, then using the definitions described above, we can easy prove the following proposition
\begin{proposition}
	\label{rowblocktimesrowblock}
	
	\begin{itemize}
		\item [1.] $\begin{vmatrix}
			\cM_{1,1}	& \cM_{1,2}\\
			\cM_{2,1}	&  \cM_{2,2}
		\end{vmatrix}=\Big\vert\,\big\vert\cM_{1,1} \quad\cM_{1,2}\big\vert_2\,\, \big\vert\cM_{2,1} \quad\cM_{2,2}\big\vert_2\Big\vert_1 = \Big\vert\,\big\vert\cM_{1,1} \quad\cM_{2,1}\big\vert_1\,\, \big\vert\cM_{1,2} \quad\cM_{2,2}\big\vert_1\Big\vert_2$.
		\item[2.] $\big\vert \cA \quad \cB \big\vert_2 \ast_2\begin{vmatrix}
			\cM_{1,1}	& \cM_{1,2}\\
			\cM_{2,1}	&  \cM_{2,2}
		\end{vmatrix} = \big\vert \cA\ast_2 \cM_{1,1} + \cB\ast_2 \cM_{2,1}  \quad \cA\ast_2 \cM_{1,2} + \cB\ast_2 \cM_{2,2}\big\vert_2.$
		\item  [3.]  $\begin{vmatrix}
			\cM_{1,1}	& \cM_{1,2}\\
			\cM_{2,1}	&  \cM_{2,2}
		\end{vmatrix} \ast_2 \big\vert \cA \quad \cB \big\vert_1 = \big\vert \cM_{1,1} \ast_2 \cA + \cM_{1,2} \ast_2 \cB\quad \cM_{2,1} \ast_2 \cA + \cM_{2,2}\ast_2 \cB \big\vert_1.$
	\end{itemize}
\end{proposition}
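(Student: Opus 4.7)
The plan is to prove the three identities directly from Definition~\ref{rowblock} and the definition of the Einstein product. Part~$1$ is purely notational: it asserts that the $2\times 2$ display used in~(\ref{Hmtensor}) yields the same $4$th-order tensor whether one first assembles $2$-mode row blocks row-by-row and then stacks them with a $1$-mode concatenation, or first assembles $1$-mode column blocks column-by-column and then joins them with a $2$-mode concatenation. Parts~$2$ and~$3$ are then the ``row times block'' and ``block times column'' distributive laws, strictly analogous to block matrix multiplication; they extend the preceding proposition from a single row/column to a full $2\times 2$ block.

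For part~$1$, I would fix an arbitrary index $(j_1,j_2,i_1,i_2)$ with $1\le j_1,i_1\le K_1$ and $1\le j_2,i_2\le 2K_2$ and split into four cases according to the signs of $j_2-K_2$ and $i_2-K_2$. In the middle expression, the inner $2$-mode blocks $\big\vert\cM_{k,1}\,\,\cM_{k,2}\big\vert_2$ use $i_2$ to select the column index $l\in\{1,2\}$, and the outer $\big\vert\cdot\,\,\cdot\big\vert_1$ then uses $j_2$ to select the row index $k\in\{1,2\}$. The right-hand expression performs the same two selections in the opposite order. Both assignments reproduce the entry rule spelled out after~(\ref{Hmtensor}), which establishes the triple equality.

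For part~$2$, I would write $\cM$ for the $2\times 2$ block tensor and expand the Einstein product at a generic index $(j_1,j_2,i_1,i_2)$ with $1\le j_2\le K_2$ and $1\le i_2\le 2K_2$:
$$\big(\big\vert\cA\,\,\cB\big\vert_2\ast_2\cM\big)_{j_1 j_2 i_1 i_2}=\sum_{k_1=1}^{K_1}\sum_{k_2=1}^{2K_2}\big\vert\cA\,\,\cB\big\vert_2(j_1,j_2,k_1,k_2)\,\cM(k_1,k_2,i_1,i_2).$$
Splitting the $k_2$ sum at $K_2$ replaces the left factor by $\cA$ for the lower half and by $\cB$ for the upper half, while by part~$1$ the same split of $k_2$ selects the top or bottom row $\cM_{1,\cdot}$ or $\cM_{2,\cdot}$ of $\cM$. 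A further split of $i_2$ at $K_2$ selects the left or right column. The restriction of the result to $i_2\in\{1,\dots,K_2\}$ collapses to $\cA\ast_2\cM_{1,1}+\cB\ast_2\cM_{2,1}$, and the restriction to $i_2\in\{K_2+1,\dots,2K_2\}$ collapses to $\cA\ast_2\cM_{1,2}+\cB\ast_2\cM_{2,2}$. Reassembling both halves via $\big\vert\cdot\,\,\cdot\big\vert_2$ yields the claimed identity. Part~$3$ is proved by the symmetric argument, with modes~$1$ and~$2$ and the roles of rows and columns exchanged throughout.

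No substantive technical obstacle arises; the whole proof is a careful bookkeeping exercise on index ranges. The subtlest point is ensuring that the split of the contracted index $k_2$ and of the free index $i_2$ in part~$2$ align with the row/column partition of $\cM$ produced by part~$1$. Once that alignment is set up, the identity is forced by the summation-wise definition of the Einstein product together with distributivity of addition over that sum.
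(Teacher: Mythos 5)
Your proof is correct: the paper states this proposition without proof (it is introduced with ``we can easily prove''), and the componentwise verification you give---commutativity of concatenations acting on distinct modes for part~1, then splitting the contracted index $k_2$ and the free index $i_2$ at $K_2$ in the Einstein-product sum for parts~2 and~3---is exactly the argument the authors implicitly rely on. The index alignments you check (contracted second mode of $\cM$ against the block-row partition in part~2, contracted fourth mode against the block-column partition in part~3) are consistent with the paper's MATLAB-colon extraction rule for $\cM_{i,j}$, so nothing is missing.
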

\section{Multidimensional linear model reduction}
\label{mlti}
Consider the following discrete time MLTI system
\begin{equation}
	\label{oryml}
	\left\{\begin{split} 
		\cX_{k+1} &= \cA\ast_N\cX_k+ \cB \ast_M \cU_k, \quad \cX_0=0,\\ 
		\cY_k &= \cC \ast_N \cX_k,
	\end{split} \right.
\end{equation}
in the continuous case, a continuous MLTI system could be described as follows
\begin{equation}
	\label{orymlc}
	\left\{\begin{split} 
		\dot{\cX}(t) &= \cA\ast_N\cX(t)+ \cB \ast_M \cU(t), \quad \cX_0=0,\\ 
		\cY(t) &= \cC \ast_N \cX(t), 
	\end{split} \right.
\end{equation}
where   $\dot{\cX}(t)$ is the derivative  of the tensor $\cX(t) \in \R^{J_1 \times\ldots\times J_N}$,  $\cA \in \R^{\JJ}$ is a  square  tensor, $\cB \in \R^{\JK}$ and $\cC \in \R^{\IJJ}$. The tensors $\cU_k \in \R^{K_1 \times \ldots \times K_M}$ ($\cU(t)$ {\it in the continuous case}) and $\cY_k\in \R^{I_1 \times \ldots \times I_M}$ ($\cY(t)$ {\it in the continuous case}) are the  control and output tensors, respectively.
\begin{proposition}
	\label{TFten}
	The  transfer function associated  to the dynamical system (\ref{orymlc}) is given by
	\begin{equation}
		\label{tf_mlti}
		\cF(s) = \cC \ast_N (s\cI-\cA)^{-1} \ast_N \cB.
	\end{equation}
\end{proposition}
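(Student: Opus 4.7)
The plan is to mimic the classical derivation of a LTI transfer function, replacing matrix products by Einstein products and appealing to the operational properties already recorded in the earlier propositions. The key observation is that everything we need — bilinearity, associativity of $\ast_N$, and the identity $\cI \ast_N \cX = \cX$ — is provided by the definition of the Einstein product and by Proposition~1 of the excerpt, so the derivation is essentially algebraic once the Laplace transform is applied componentwise.

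First, I would apply the Laplace transform entrywise to the state equation in \eqref{orymlc}. Writing $\cX(s)$ and $\cU(s)$ for the entrywise transforms and using the initial condition $\cX_0=0$, the left side becomes $s\,\cX(s)$. The right side transforms into $\cA\ast_N\cX(s) + \cB\ast_M\cU(s)$ because the Einstein product is a finite linear combination of entries (hence commutes with the Laplace integral) and $\cA,\cB$ do not depend on $t$. This yields
\begin{equation*}
s\,\cX(s) - \cA\ast_N\cX(s) = \cB\ast_M\cU(s).
\end{equation*}

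Next I would rewrite the left-hand side as $(s\cI-\cA)\ast_N\cX(s)$. This uses part~2 of Proposition~1 to replace $s\cX(s)$ by $s\cI\ast_N\cX(s)$, and then the bilinearity of $\ast_N$, which is immediate from its summation formula. For $s\notin\Lambda(\cA)$, the tensor $s\cI-\cA$ is invertible in the sense of the tensor inverse defined in the excerpt (this is exactly the contrapositive of Definition~\ref{defteneigen}: if $s\cI-\cA$ were singular, there would exist a nonzero $\cX$ with $(s\cI-\cA)\ast_N\cX=0$, i.e., $s\in\Lambda(\cA)$). Left-multiplying by $(s\cI-\cA)^{-1}$ and using associativity of the Einstein product together with $\cI\ast_N\cX(s)=\cX(s)$ gives
\begin{equation*}
\cX(s) = (s\cI-\cA)^{-1}\ast_N \cB\ast_M\cU(s).
\end{equation*}

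Finally, I would Laplace-transform the output equation to obtain $\cY(s)=\cC\ast_N\cX(s)$ and substitute the expression for $\cX(s)$, using associativity of $\ast_N$ once more, to conclude
\begin{equation*}
\cY(s) = \cC\ast_N(s\cI-\cA)^{-1}\ast_N\cB\ast_M\cU(s),
\end{equation*}
which identifies the transfer function with the stated expression.

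The only genuinely nontrivial point is the invertibility of $s\cI-\cA$, since associativity, bilinearity and the interaction of the Laplace transform with $\ast_N$ are routine once the sums in the Einstein product are written out explicitly. I would therefore spend the bulk of the write-up justifying that $s\notin\Lambda(\cA)$ implies $s\cI-\cA$ is invertible in the tensor sense; this can also be handled by passing through the unfolding $\Psi$ of Definition~\ref{transphi}, which turns $\ast_N$ into a standard matrix product and $s\cI-\cA$ into $sI-\Psi(\cA)$, reducing the question to the familiar matrix result.
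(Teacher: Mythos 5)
Your proof is correct, but it takes a genuinely different route from the paper's. The paper does not Laplace-transform the tensor system at all: it applies the unfolding isomorphism $\Psi$ of Definition \ref{transphi} to convert the MLTI system into an ordinary matrix LTI system, quotes the classical formula $F(s)=C(sI-A)^{-1}B$ for that system, and then pulls the result back to tensor form using the homomorphism property $\Psi^{-1}(XY)=\cX \ast \cY$ from [Brazell et al., Lemma 3.1], so that invertibility, associativity, and the transform step are all inherited from matrix theory in one stroke. You instead work directly at the tensor level, transforming the continuous system entrywise, factoring out $(s\cI-\cA)$, and inverting. Both arguments are sound; yours is more self-contained and, incidentally, more faithful to the statement, since the paper's proof actually argues through the discrete system (\ref{oryml}) and the Z-transform even though the proposition concerns the continuous system (\ref{orymlc}). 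The paper's version is shorter because every structural property of $\ast_N$ that you must verify by hand --- bilinearity, associativity, and the invertibility of $s\cI-\cA$ for $s\notin\Lambda(\cA)$ --- comes for free from the isomorphism. Note that your own treatment of the one nontrivial point, invertibility, ultimately passes through $\Psi$ as well, so the two proofs are less far apart than they first appear.
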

\begin{proof}
	Given the transformation $\Psi$ defined in \ref{transphi}, we can show that the multi-linear system (\ref{oryml}) could be transformed into an equivalent matrix linear  dynamical system, defined by 
	\begin{equation}
		\label{linsys}
		\left\{\begin{split} 
			x_{k+1} &= Ax_k+ B  u_k, \quad x_0=0\\ 
			y_k &= Cx_k, 
		\end{split} \right.
	\end{equation}
	where $A,B$ and $C$ are the matrices resulting  from the matricization of the tensors $\cA, \cB$ and $\cC$ respectively by using $\Psi$. The same remark goes  for the vectors $x_k,u_k$ and $y_k$. System of the form (\ref{linsys}), can be represented in a frequency domain via Z-transform by the following transfer function (cf. \cite{discsys})
	$$F(s)=C(sI-A)^{-1}B.$$
	From [\cite{Brazell}, Lemma $3.1$], it is proved that for two matrices $X$ and $Y$ with appropriate dimensions, we have   $\Psi^{-1}(XY)=\cX \ast \cY$. Then, we get the following result
	\begin{equation*}
		\begin{split}
			\cF(s) =\Psi^{-1}(F(s)) &= \Psi^{-1}(C(sI-A)^{-1}B) \\%\text{is the associated transfer function to (\ref{linsys})}) 
			&=\Psi^{-1}(C) \ast_N \Psi^{-1}((sI-A)^{-1}) \ast_N \Psi^{-1}(B) \\
			&=\cC \ast_N (s\cI-\cA)^{-1} \ast_N \cB.
		\end{split}
	\end{equation*}
\end{proof}
It is worthy to mention that the theoretical  concepts that characterize the linear dynamical  system (\ref{linsys}) such as  stability, reachability and observability have been extended to the MLTI systems. Originally, these theoretic concepts have been introduced by Can et al., in \cite{CAN2} in the case of a discrete MLTI system (\ref{oryml}). The results shown below are exactly the ones described for the matrix  case if one considers the isomorphism $\Psi$ described previously.
If we consider a non-zero initial tensor state $\cX_0$, and  analogously to the matrix  case, the solution to the discrete MLTI systems (\ref{oryml}) is given by
% the solution to the continuous
\begin{equation}
	\begin{array}{lll}
		\cX_k&=& \cA^{\star k} \ast_N \cX_0 +\displaystyle\sum_{j=0}^{k-1} \cA^{\star k-j-1} \ast_N \cB \ast_M \cU_j, %\quad \text{(disc. case),} \\
		%		\cX_k&=& e^{\cA(t-t_0)}\ast\cX_0 + \displaystyle \int_{t_0}^{t}e^{\cA(t-\tau)}\ast \cB \ast \cU(\tau)\text{d}\tau. \quad \text{(cont. case).}
	\end{array}
\end{equation}
where $\cA^{\star k} =  \underbrace{\cA \ast \ldots \ast \cA}_{k}$. The notion of stability of an MLTI system is given as follows
\begin{df}
	Given the MLTI system (\ref{oryml}). The system is called 
	\begin{itemize}
		\item[1.]  {\it Asymptotically stable} if $\|\cX_k\|\leq \kappa \|\cX_0\| $ for some $\kappa >0$.
		\item[2.] {\it Stable} if $\|\cX_k\| \rightarrow 0$ as $t \rightarrow \infty$. 
	\end{itemize}
\end{df}
\medskip 
\noindent We have the following results related to the stability of an MLTI system.
\begin{proposition}(\cite{CAN1})
	\begin{itemize}
		\item[1.] System (\ref{oryml}) is considered {\it stable } if and only if  the magnitudes of all the eigenvalues of $\cA$ ({\it Definition \ref{defteneigen}}) are less than or equal to $1$. Additionally, for any eigenvalues equal to $1$, the algebraic and geometry multiplicity must be equal.
		\item[2.] If the magnitudes of all the eigenvalues of $\cA$ are less than $1$ than System (\ref{oryml}) is called asymptotically stable.
	\end{itemize}
\end{proposition}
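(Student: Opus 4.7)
The plan is to reduce both stability claims to the well-known matrix analogues via the unfolding isomorphism $\Psi$ from Definition~\ref{transphi}, exactly as was done in the proof of Proposition~\ref{TFten}. First, I would apply $\Psi$ (together with its natural vectorization counterpart on first-order tensors) to each object in~(\ref{oryml}): the tensors $\cA, \cB, \cC$ become the matrices $A, B, C$ of~(\ref{linsys}), and the state, input and output tensors become the vectors $x_k, u_k, y_k$. The identity $\Psi^{-1}(XY) = \cX \ast_N \cY$ (Lemma 3.1 of~\cite{Brazell}) ensures that the dynamics transfer cleanly, and since $\Psi$ is a linear isometry with respect to the Frobenius norm, $\|\cX_k\| = \|x_k\|_2$. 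Consequently (asymptotic) stability of the tensor system in the sense of the Definition is equivalent to the corresponding property for the matrix system.

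Next, I would establish that the spectra coincide, i.e.\ $\Lambda(\cA) = \sigma(A)$. If $(\lambda,\cX)$ satisfies $\cA \ast_N \cX = \lambda \cX$ in the sense of Definition~\ref{defteneigen}, then unfolding both sides yields $A\,\Psi(\cX) = \lambda\,\Psi(\cX)$; the converse follows by reshaping matrix eigenvectors back to tensors. Because this correspondence is a linear bijection, it preserves the dimension of every eigenspace, so geometric multiplicities match; the same reshaping applied to generalized eigenvectors, combined with the intertwining $\Psi(\cA \ast_N \cY) = A\,\Psi(\cY)$, shows that Jordan block sizes agree, and hence algebraic multiplicities match as well.

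Finally, I would invoke the classical stability criterion for discrete LTI systems in the matrix setting. Using the explicit solution $\cX_k = \cA^{\star k} \ast_N \cX_0 + \sum_{j=0}^{k-1} \cA^{\star k-j-1} \ast_N \cB \ast_M \cU_j$ stated just above, the stability discussion reduces to the homogeneous behaviour of $A^k$. It is standard that $\{\|A^k\|\}_k$ is bounded iff the spectral radius of $A$ is at most $1$ and every eigenvalue on the unit circle is semisimple (algebraic $=$ geometric multiplicity); and $\|A^k\|\to 0$ iff the spectral radius is strictly less than $1$. Combining this with the identification $\Lambda(\cA) = \sigma(A)$ and with $\|\cX_k\| = \|x_k\|_2$ yields both items~1 and~2.

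The main obstacle, and the only point requiring real care, is the multiplicity clause in item~1: one must verify not only that tensor eigentensors of $\cA$ correspond bijectively to matrix eigenvectors of $A$, but also that the notion of semisimplicity transfers, which forces one to track generalized eigentensors through $\Psi$. This in turn reduces to the intertwining property $\Psi(\cA \ast_N \cY) = A\,\Psi(\cY)$, after which the conclusion is pure linear algebra; all remaining steps are routine translations between the tensor and matrix formulations.
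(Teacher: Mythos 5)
The paper does not prove this proposition at all: it is imported verbatim from \cite{CAN1} and stated without argument, so there is no in-paper proof to compare against. Your reduction to the matrix case via the unfolding isomorphism $\Psi$ is the natural route and is exactly the mechanism the paper itself uses for the analogous transfer-function result (Proposition \ref{TFten}), so your argument is both correct and stylistically consistent with the text: $\Psi$ conjugates the Einstein-product action of $\cA$ on state tensors to the action of $A=\Psi(\cA)$ on vectors, hence it preserves eigenvalues, eigenspace dimensions, Jordan structure and (being a reindexing) the Frobenius norm, after which items 1 and 2 follow from the classical facts that $\{\|A^k\|\}_k$ is bounded iff $\rho(A)\le 1$ with semisimple unimodular eigenvalues, and $\|A^k\|\to 0$ iff $\rho(A)<1$. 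One caveat you should flag: the paper's preceding definition swaps the usual meanings, declaring ``asymptotically stable'' to mean $\|\cX_k\|\le\kappa\|\cX_0\|$ and ``stable'' to mean $\|\cX_k\|\to 0$; your proof (correctly, and in agreement with \cite{CAN1}) pairs item 1 with boundedness and item 2 with decay, which only matches the proposition under the conventional definitions. That mismatch is a defect of the paper's definition, not of your argument, but it is worth stating explicitly which convention you are proving.
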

\noindent
In the discrete case, the definitions of the reachability and the observability of a discrete MLTI system are defined below. 
\begin{df}
	Consider the discrete MLTI system (\ref{oryml}). The pair ($\cA,\cB$)   is reachable on $\left[k_0,k_1\right]$ if and only if the {\it reachability Gramian} 
	\begin{equation}
		\cP_d(k_0,k_1)  =  \sum_{k=k_0}^{k_1-1} \cA^{k_1-k-1} \ast_N\cB \ast_M \cB^T\ast_N (\cA^{k_1-k-1})^T,
	\end{equation}
	is a weakly symmetric positive-definite square tensor, see \cite{CAN1} for more details about this notion.
\end{df}
\noindent
The reachability Gramian is a solution of  the following discrete  Lyapunov (called also Stein) tensor equation
\begin{equation}
	\label{layptenre}
	\cP_d - \cA \ast_N \cP_d \ast_N \cA^T= \cB \ast_N \cB^T.
\end{equation} 
Likewise, the observability of an MLTI system is given as follows. 
\medskip
\begin{df}
	The pair ($\cA,\cC$) is observable on $\left[k_0, k_1\right]$ if and only if the observability Gramian
	\begin{equation}
		\cQ_d(k_0,k_1)  =  \sum_{k=k_0}^{k_1-1} (\cA^{k_1-k-1})^T \ast_N\cC^T \ast_M \cC\ast_N \cA^{k_1-k-1},
	\end{equation}
	is a weakly symmetric positive-definite square tensor, see \cite{CAN1} for more details about this notion.
\end{df} 

\noindent
This observability Gramian $\cQ_d$ determines how much each state influences future outputs \cite{Rowley}. It fulfils the following discrete {\it Discrete-Lyapunov tensor equation}
\begin{equation}
	\label{layptenob}
	\cQ_d - \cA^T \ast_N \cQ_d \ast_N \cA= \cC^T \ast_M \cC.
\end{equation} 

\noindent
The two discrete Lyapunov tensor equations (\ref{layptenre}) and (\ref{layptenob}) play a crucial role in the Balanced Truncation model order reduction for MLTI systems described in \cite{CAN3}. A technique based on TT-algebra \cite{CAN3} and outer product has been established to solve such equations. In the next section, we describe a tensor Krylov subspace projection technique for solving properly such  equations. As in the case of LTI systems, in order to construct a reduced model using a Krylov subspace techniques, the whole process remains on approximating the associated transfer function. This could also be shared with the MLTI systems. In the following, we introduce a tensor global  Arnoldi Krylov method using  Einstein product to construct such approximation that yields at the end to  the reduced MLTI model. We use the classic Krylov subspace and also introduce the tensor extended Krylov subspace.
\subsection{Tensor based global Krylov subspace methods}
\subsubsection{Classic  Krylov subspace processing}
The first step in the approximation of the associated transfer function (\ref{tf_mlti}) to the MLTI system (\ref{oryml}) is that we rewrite it as 
$$\cF(s) = \cC \ast_N \cX, $$
where $\cX$ verify the following multi-linear system
\begin{equation}
	\label{mlsys}
	(s\cI-\cA) \ast_N \cX = \cB.
\end{equation}
In order to find an approximation to $\cF(s)$, it remains to find an approximation to the above  multi-linear system, which can be done by using a projection Krylov subpace technique. Let us first define the $m$-th tensor Krylov subspace
\begin{equation}
	\label{t_ks}
	\cK_m(\cA, \cB) = \text{span} \{\cB, \cA \ast \cB, \ldots, \cA^{\star m-1} \ast \cB\}  \subseteq \R^{\JK},
\end{equation} 
where $\cA^{\star k} =  \underbrace{\cA \ast \ldots \ast \cA}_{k}$. We restrict our attention in the case where $N=M=2$ for the sake of simplicity. In other words, $4th$-order tensors will be taken into consideration in the following sections. The results can be easily  extended to the cases where $N\geq 2$ and $M \geq 2$. 
%\begin{remark}
In the sequel, unless otherwise specified, we always refer to the Einstein product between two $4th$-order tensors $"\ast_2"$ as $"\ast"$.\\
%\end{remark}
The tensor global Anroldi process is an analogue to the one described for matrices \cite{global}. The algorithm linked to it is outlined below, and it takes a form that differs from the one described in  \cite{AlaaGuide}.
\begin{algorithm}[H]
	\caption{Tensor global Arnoldi algorithm}
	\begin{enumerate}
		\item Input :  $\cA \in \R^{\JJF}$, $\cB \in \R^{\JKF}$ and a fixed integer  $m$.
		\item Output : $\ctV_{m+1} \in \R^{J_1 \times J_2 \times K_1 \times (m+1)\, K_2}$ and $\overline{H}_m \in \R^{m+1 \times m}$.
		\item Set $\beta =\|\cB\|_F$ and $\cV_1=\cB/ \beta$. \quad ({\it MATLAB notation \,\,}$\ctV_{m+1}(:,:,:,1:K_2)=\cV_1$)
		\item for $j=1 \cdots m$
		\item $\cW= \cA \ast\cV_j$
		\item \quad for $i=1,\ldots, j$.
		\item  \qquad $h_{i,j} = <\cV_i, \cW>$.
		\item  \qquad $\cW = \cW-h_{i,j} \cV_i$.
		\item \quad end for.
		\item $h_{j+1,j}= \|\cW\|_F$. If $h_{j+1,j}=0$, stop. Otherwise $\cV_{j+1}= \cW/ h_{j+1}.$
		\item [] ({\it MATLAB notation.\,\,}$\ctV_{m+1}(:,:,:,jK_2+1:(j+1)K_2)=\cV_{j+1}$).
		\item end for.
	\end{enumerate}
	\label{tengloalgo}
\end{algorithm}
\noindent
What we end up with, after $m$ steps, is the following decomposition
\begin{equation}
	\label{decomp}
	\cA \ast \ctV_m = \ctV_{m+1} \ast (\overline{H}_m \circledast \cI_K),
\end{equation}
where $\ctV_{m+1} \in \R^{\JKmm}$ contains $\cV_i \in \R^{\JKF}$ for $i=1,\ldots,m+1$, as it is explained in the algorithm above. $\overline{H}_m \in \R^{m+1 \times m}$ is a Hessenberg matrix of the following form 
$$\overline{H}_m=\begin{bmatrix}
	h_{1,1}	&h_{1,2}  & \ldots & h_{1,m}  \\
	h_{2,1}	& h_{2,2}  & \ldots & h_{2,m}  \\
	0	& h_{3,2} & \ldots  & h_{3,m}  \\
	\vdots	& \ddots &  \ddots & \vdots  \\
	0	& \ddots &  & h_{m,m}  \\
	0 &\ldots& 0 & h_{m+1,m}
\end{bmatrix}.$$
$\cI_K \in \R^{\KK}$ is the identity tensor, a diagonal tensor with entries
\begin{equation}
	\label{identen}
	\cI_K(i,j,k,l)=\delta_{i,k} \delta_{j,l}, \quad \text{where} \quad \delta_{p,q} = 1, \,\, \text{if} \,\,p=q,\,\, \delta_{p,q} = 0,\,\, \text{otherwise}. 
\end{equation} 
The product $\circledast$ used in the decompositions is defined as follows.
\medskip
\begin{df}
	\label{npd}
	Given a matrix $P\in \R^{m \times n}$ and a tensor $\cJ \in \R^{\KK}$, the resulted tensor $\cR = P \circledast \cJ $ is defined as follows
	\begin{enumerate}
		\item $\cR \in \R^{K_1 \times mK_2 \times K1 \times nK_2}$.
		\item $\cR(i,:,i,:) = P \otimes \cJ(i,:,i,:)$, for $i=1,\ldots,K_1$, where $\otimes$ is the Kronecker product.
	\end{enumerate}
\end{df}
\medskip
\noindent Notice  that if $\cJ$ is a matrix in $\R^{K_1 \times K_2}$ then the product $\circledast$ becomes the Kronecker product $\otimes$.
%\end{remark}
The following result is used in the computational process described in the next sections. It is summarized in the following straightforward proposition. 
\begin{proposition}
	Let $\ctV_m \in \R^{\JKm}$ be the tensor generated from Algorithm \ref{tengloalgo} Let $P,Q^T$ two matrices in $\R^{m\times n}$, $\cI_K \in \R^{\KK}$ the identity tensor. Then we have  
	$$\ctV_m \ast(PQ \circledast \cI_K)=\ctV_m \ast((P \circledast \cI_K) \ast(Q \circledast \cI_K)),$$
\end{proposition}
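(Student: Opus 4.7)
The statement reduces to proving the underlying tensor identity
\[
(PQ)\circledast\cI_K \;=\; (P\circledast\cI_K)\ast(Q\circledast\cI_K),
\]
after which the claim follows by applying $\ctV_m\ast(\cdot)$ to both sides (no structural properties of $\ctV_m$ are actually needed). So my plan is to prove this identity entry-by-entry and then apply it.

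\smallskip

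First I would fix the dimensions and index conventions. With $P\in\R^{m\times n}$ and $Q\in\R^{n\times m}$, Definition \ref{npd} gives $P\circledast\cI_K\in\R^{K_1\times mK_2\times K_1\times nK_2}$, $Q\circledast\cI_K\in\R^{K_1\times nK_2\times K_1\times mK_2}$, and $(PQ)\circledast\cI_K\in\R^{K_1\times mK_2\times K_1\times mK_2}$; both sides of the claimed identity therefore live in the same space, and the Einstein product $\ast_2$ used on the right contracts the middle pair of indices of size $K_1\times nK_2$. Next I would unpack the action of $\circledast$ on $\cI_K$: for any matrix $M\in\R^{p\times q}$, the slice $(M\circledast\cI_K)(i,:,i,:)=M\otimes\cI_K(i,:,i,:)=M\otimes I_{K_2}$, while off-diagonal slices $(M\circledast\cI_K)(i,:,j,:)$ for $i\neq j$ vanish (since $\cI_K(i,:,j,:)=0$). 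In compact form, using block indexing $q=(a-1)K_2+k$ with $1\le a\le p$, $1\le k\le K_2$,
\[
(M\circledast\cI_K)\bigl(i,(a-1)K_2+k,\,j,(b-1)K_2+l\bigr) \;=\; \delta_{i,j}\,\delta_{k,l}\,M_{a,b}.
\]

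\smallskip

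With this formula, the left-hand side of the identity evaluates immediately to $\delta_{p,r}\,\delta_{k,l}\,(PQ)_{a,b}$. For the right-hand side I would write out the definition of the Einstein product $\ast_2$, summing over the two contracted indices $p'\in[1,K_1]$ and $q'=(c-1)K_2+k''\in[1,nK_2]$:
\[
\bigl((P\circledast\cI_K)\ast(Q\circledast\cI_K)\bigr)\bigl(p,(a-1)K_2+k,r,(b-1)K_2+l\bigr)
=\sum_{p',c,k''}\delta_{p,p'}P_{a,c}\delta_{k,k''}\,\delta_{p',r}Q_{c,b}\delta_{k'',l}.
\]
The Kronecker deltas collapse the triple sum to $\delta_{p,r}\,\delta_{k,l}\sum_c P_{a,c}Q_{c,b}=\delta_{p,r}\,\delta_{k,l}(PQ)_{a,b}$, exactly matching the left-hand side. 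Hence the tensor identity holds, and contracting with $\ctV_m$ using associativity of the Einstein product yields the proposition.

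\smallskip

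There is no real obstacle beyond careful multi-index bookkeeping; the only subtlety is reading Definition \ref{npd} correctly, namely that $\cI_K$ is diagonal in its two pairs of indices so that $M\circledast\cI_K$ is block-diagonal in the $K_1$ indices and that each diagonal block is the matrix Kronecker product $M\otimes I_{K_2}$. Once this is laid out, the computation above shows that the operation $M\mapsto M\circledast\cI_K$ behaves as a homomorphism from matrix product to Einstein product, which is what powers the identity.
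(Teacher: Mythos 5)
Your proof is correct. The paper states this proposition without proof (calling it ``straightforward''), and your entry-wise verification of the stronger identity $(PQ)\circledast\cI_K=(P\circledast\cI_K)\ast(Q\circledast\cI_K)$, followed by contraction with $\ctV_m$, is exactly the natural way to fill that in; the one point worth making explicit, which you handle correctly, is that Definition \ref{npd} only prescribes the diagonal slices $\cR(i,:,i,:)$, so one must adopt the (intended) convention that the off-diagonal slices $\cR(i,:,j,:)$, $i\neq j$, vanish for the homomorphism property to hold.
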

\noindent
Notice that the computed $\cV_i$'s from Algorithm \ref{tengloalgo} form an orthonormal basis of the tensor Krylov subspace \ref{t_ks}; i.e. $\ctV_m^T \diamond \ctV_m = I_m$, where $I_m \in \R^{m\times m}$ is the identity matrix. The product $\diamond$ is an analogous to the one defined in the global Arnoldi process described for the matrix case in \cite{global} as follows
\begin{equation}
	\ctV_m^T \diamond \ctV_m =\begin{bmatrix}
		<\cV_1,\cV_1>	& <\cV_1,\cV_2>  &\ldots  & <\cV_1,\cV_m> \\
		<\cV_2,\cV_1>	& <\cV_2,\cV_2> & \ldots &<\cV_2,\cV_m>  \\
		\vdots	& \ddots & \ddots &\ldots  \\
		<\cV_m,\cV_1>& \ldots &\ldots  & <\cV_m,\cV_m> 
	\end{bmatrix} \in \R^{m\times m},
\end{equation}
where $<\cdot,\cdot>$ is the inner-product defined in the previous section for tensors. Thanks to \cite{global2} which concerns some properties of the $\diamond$ product in the matrix case, we can obtain  similar results in the tensor case 
\begin{proposition}
	\label{ten_prop}
	Given tensors $\cE,\cF,\cG \in \R^{\JKm}$ and matrix $S \in \R^{m\times m}$. Then we have
	\begin{enumerate}
		\item $(\cE+\cF)^T \diamond \cG = \cE^T \diamond \cG + \cF^T \diamond \cG.$
		\item $\cE^T \diamond (\cF +\cG) = \cE^T \diamond \cF + \cE^T \diamond \cG$.
		\item $\cE^T \diamond (\cF\ast(S \circledast I_K)) = (\cE^T \diamond \cF)S$.
	\end{enumerate}
\end{proposition}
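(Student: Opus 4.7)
The plan is to prove each of the three identities by unpacking the definition of $\diamond$ and checking equality entry by entry in the resulting $m \times m$ matrix. Properties $1$ and $2$ are essentially bilinearity of the inner product applied to each block pair of $\ctV_m$-style tensors. First I would note that the blocks of $\cE + \cF$ along the fourth axis are exactly $\cE_i + \cF_i$, hence
$$((\cE+\cF)^T \diamond \cG)_{ij} = <\cE_i + \cF_i,\, \cG_j>.$$
Then I would invoke the bilinearity of $<\cdot,\cdot>$ — which follows immediately from the definition $<\cX,\cY> = tr(\cX^T \ast_N \cY)$ and linearity of $\ast_N$ and $tr$ — to split this into $<\cE_i,\cG_j> + <\cF_i,\cG_j>$, which by definition equals $(\cE^T \diamond \cG)_{ij} + (\cF^T \diamond \cG)_{ij}$. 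Property $2$ is handled in the exact same way on the second argument.

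The interesting part is property $3$. The main obstacle is to describe correctly the block action of $\cF \ast (S \circledast \cI_K)$ in terms of the blocks $\cF_k$ of $\cF$. My plan is to first establish, as a short intermediate claim, that if $\cF \in \R^{\JKm}$ has blocks $\cF_1,\ldots,\cF_m \in \R^{\JKF}$ along its fourth axis and $S = (s_{kj}) \in \R^{m\times m}$, then the $j$-th block of $\cF \ast (S \circledast \cI_K)$ is
$$\sum_{k=1}^{m} s_{kj}\,\cF_k.$$
This is obtained by unpacking Definition \ref{npd}: the $(k,j)$ block of $S \circledast \cI_K$ (of size $K_1\times K_2\times K_1\times K_2$) is exactly $s_{kj}\,\cI_K$, so when the Einstein product $\ast_2$ contracts the inner $K_1,K_2$ indices, the identity block $\cI_K$ leaves each $\cF_k$ intact (by Proposition~2 of the paper) and the scalars $s_{kj}$ factor out. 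The cleanest way to write this is by iterating Proposition~\ref{rowblocktimesrowblock}(3), or equivalently by a direct index check on the $2$-mode row/column block partitioning introduced in Section~\ref{blockten}.

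Once this block identity is in hand, property $3$ is immediate by combining it with property $2$: by the definition of $\diamond$,
$$(\cE^T \diamond (\cF \ast (S \circledast \cI_K)))_{ij} = \Big< \cE_i,\, \sum_{k=1}^{m} s_{kj}\,\cF_k \Big> = \sum_{k=1}^{m} <\cE_i,\cF_k>\, s_{kj} = ((\cE^T \diamond \cF)\, S)_{ij},$$
so the two $m\times m$ matrices agree entrywise. I expect the only real pitfall to be index bookkeeping in the definition of $\circledast$ together with the contraction convention of $\ast_2$; a quick sanity check on $m=2$ via Proposition~\ref{rowblocktimesrowblock} should make the pattern transparent before committing to the general argument.
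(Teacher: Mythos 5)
Your proposal is correct and fills in exactly the argument the paper omits: the paper states this proposition without proof, deferring to the matrix-case properties of the $\diamond$ product in its cited reference, and your entrywise verification (bilinearity of $\langle\cdot,\cdot\rangle = tr(\cdot^T\ast\cdot)$ for items 1--2, plus the block identity that the $j$-th block of $\cF\ast(S\circledast \cI_K)$ is $\sum_k s_{kj}\cF_k$ for item 3) is precisely the intended transplant of that matrix-case argument to the tensor setting. One trivial slip: the relevant part of Proposition 4 for your intermediate block claim is item 2 (row block times block tensor), not item 3, though your direct index check via Definition 5 covers it regardless.
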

\noindent Based on these  results, and using the definition of the product $\circledast$, we can easily prove the following proposition that will be of a great use in terms of simplification as we will show in the following sections. 
Let  $\ctV_{m+1}$ be the orthonormal tensor given by  Algorithm \ref{tengloalgo}, then as for the matrix case, we have the classical algebraic relations 
\begin{equation}
	\ctV_{m+1}^T \diamond (\cA \ast \ctV_m) =  \overline{H}_m, \quad \text{and} \quad \ctV_m^T \diamond (\cA \ast \ctV_m) =  H_m. 
\end{equation}
where $H_m \in \R^{m\times m}$ is also a Hessenberg matrix extracted from $\overline{H}_m$ by deleting the last row.
\subsubsection{The extended Krylov subspace process}
For the matrix case,  extended block and global Arnoldi methods are more effective than  the classical ones. This is usually the case In model order reduction techniques \cite{Abidi,houda1}, for  large scale dynamical systems;  \cite{hey,heyouni0,simoncini,simon}. In this section,  we describe the extended tensor global Arnoldi process. Before that let us give some definitions and remarks about the extended tensor Krylov subspace. Let  $\cA$ and $\cV$ be two  tensors of appropriate dimensions, then the $2m$-dimension	 extended tensor global Krylov subspace denoted by $\cK^e_m(\cA,\cV)$, can be defined as follows
\begin{equation}
	\label{tenextkrysub}
	\begin{array}{lll}
		\cK^e_m(\cA,\cV) &=& \cK_m(\cA,\cV) + \cK_m(\cA^{-1},\cA^{-1} \ast \cV)\\
		&=& \text{span}\left\{ \cA^{-\star m}\ast \cV,\ldots,\cA^{-1}\ast\cV, \cV, \ldots,\cA^{\star \,m-1}\ast \cV\right\},
	\end{array}
\end{equation}
where $\cA^{\star k} =  \underbrace{\cA \ast \ldots \ast \cA}_{k}$, $\cA^{\star -k} =  \underbrace{\cA^{-1} \ast \ldots \ast \cA^{-1}}_{k}$ and $\cK_m(\cA,\cV)$ is the tensor classic global Krylov subspace defined in (\ref{t_ks}) associated to the pair $(\cA,\cV)$. It is worth noting that if we consider the isomorphism $\Psi$ defined previously to define the matrices $\Psi(\cA) = A \in \R^{J_1 J_2\times J_1J_2}$ and $\Psi(\cV)=V\in \R^{J_1J_2\times K_1K_2}$, then we can generalize all the results obtained in the matrix case, from the extended global Krylov subspace process applied to the pair $(A,V)$, to the tensor structure via the Einstein product. Further details are given below. Next, we  describe  the process for  the construction of the tensor $\ctV_{2m}$ associated to the extended global Krylov subspace defined above. The process can be guaranteed via the following extended Arnoldi algorithm. As mentioned earlier, our interest is focused only on $4th$-order tensors, but the following results remain true also for higher order.
\begin{algorithm}[H]
	\caption{Tensor Extended Global Arnoldi (TEGA) algorithm}
	\begin{enumerate}
		\item Input :  $\cA \in \R^{\JJF}$, $\cB \in \R^{\JKF}$ and a fixed integer  $m$.
		\item Output : $\ctV_{2(m+1)} \in \R^{J_1 \times J_2 \times K_1 \times 2(m+1)K_2}$ and $\overline{H}_m \in \R^{2(m+1) \times 2m}$.
		\item Compute the global QR-decomposition of $|\cB, \, \cA^{-1}\ast \cB|_2 \in \R^{J_1\times J_2\times K_1\times 2K_2}$ {\it i.e.,} $\text{gQR}(|\cB, \, \cA^{-1}\ast \cB|_2)= [\cV_1, \omega]$. 
		({\it MATLAB notation \,\,}$\ctV_{2(m+1)}(:,:,:,1:2K_2)=\cV_1$, $\omega =\begin{bmatrix}
			\omega_{1,1}&\omega_{1,2} \\
			0 & \omega_{2,2} 
		\end{bmatrix} \in \R^{2 \times 2}$)
		\item for $j=1 \cdots m$
		\item Set $\cW= |\cA \ast\cV_j^1, \cA^{-1}\ast \cV_j^2|_2$, where $\left\{\begin{array}{lll}
			\cV_j^1&=&\cV_j(:,:,:,(j-1)K_2+1:jK_2), \\
			\cV_j^2&=&\cV_j(:,:,:,jK_2+1:(j+1)K_2),
		\end{array}\right.$
		\item \quad for $i=1,\ldots, j$.
		\item  \qquad $H_{i,j} = \cV_i^T \diamond \cW \in \R^{2 \times 2}$.
		\item  \qquad $\cW = \cW-\cV_i\ast(H_{i,j}\circledast \cI_K)$.
		\item \quad end for.
		\item Compute the global QR (gQR) of $\cW$, {\it i.e.,} $\text{gQR}(\cW)= [\cV_{j+1}, H_{j+1,j}]$
		%\item []  \textcolor{red}{\bf ({\it MATLAB notif.\,\,}$\ctV_{2(m+1)}(:,:,:,(j+1)K_2+1:(j+3)K_2)=\cV_{j+1}$).????}
		\item end for.
	\end{enumerate}
	\label{tenextgloalgo}
\end{algorithm}
\noindent Algorithm  \ref{tenextgloalgo} computes an orthonormal tensor basis corresponding to  $\ctV_{2(m+1)}\in \R^{J_1 \times J_2 \times K_1 \times 2(m+1)K_2}$, associated to the extended global Krylov subspace (\ref{tenextkrysub}) applied to the pair $(\cA,\cB)$.  This  means that for $i,j=1,\ldots,m$, the following is verified
\begin{equation}
	\label{orthcond}
	\cV_i^T \diamond \cV_j= 0_{2\times 2} \quad i \neq j \quad \text{and} \quad \cV_i^T \diamond \cV_i = I_{2\times 2}. 
\end{equation}
An upper Hessenber matrix $\overline{H}_m \in \R^{2(m+1) \times 2m}$ is also  generated with upper block matrices $H_{i,j} \in \R^{2 \times 2}$. In Step $3$ and Step $10$ of Algorithm \ref{tenextgloalgo}, we compute a global QR decomposition to get the element $\cV_j$ of the tensor basis $\ctV_{2(m+1)}$. This decomposition is analogue to the one described in \cite{Abidi,houda1} for the extended global Krylov subspace in the matrix case. We use this decomposition to get  $|\cB, \, \cA^{-1}\ast \cB|_2= \cV_1 \ast (\omega \circledast \cI_K)$ in Step $3$ and $\cW=\cV_{j+1}\ast (H_{j+1,j}\circledast \cI_K)$ in Step $10$, where product $\circledast$ is the one described in  Definition \ref{npd}. From steps $5$ to $9$, we obtain the following relations 
\begin{equation}
	\begin{array}{lll}
		\cV_{j+1}\ast (H_{j+1,j}\circledast I_K)&=& |\cA \ast\cV_j^1, \cA^{-1}\ast \cV_j^2|_2 - \displaystyle \sum_{i=1}^{j} \cV_i\ast(T_{i,j}\circledast \cI_K),\\
		H_{j+1,j}&=&\cV_{j+1}^T \diamond |\cA \ast\cV_j^1, \cA^{-1}\ast \cV_j^2|_2\\
		&=&\cV_{j+1}^T \diamond \displaystyle \sum_{i=1}^{j+1} \cV_i\ast(H_{i,j}\circledast \cI_K).
	\end{array}
\end{equation}
After $m$ steps of Algorithm \ref{tenextgloalgo} we can show that 
\begin{equation}
	\begin{array}{lll}
		\cA \ast \ctV_{2m} &=& \ctV_{2(m+1)} \ast (\overline{T}_m E_m^T\circledast \cI_K)\\
		&=& \ctV_{2m} \ast(T_m \circledast \cI_k) + \cV_{m+1} \ast(T_{m+1,m}E_m^T\circledast \cI_K),
	\end{array}
\end{equation}
where $E_m$ is the last $2m\times 2$ block column of the identity matrix $I_m \in \R^{2m\times 2m}$,  $\overline{T}_m = \ctV_{2(m+1)}^T \diamond (\cA\ast \ctV_{2m}) \in \R^{2(m+1) \times 2m}$ and $T_m$ is a $2m \times 2m$ block Hessenberg matrix
defined by $T_m= \ctV_{2m}^T \diamond (\cA\ast \ctV_{2m}) \in \R^{2m \times 2m}$, where $T_{i,j} = \cV_i^T\diamond (\cA \ast \cV_j)$, for $i,j=1,\ldots,m$. We summarize in the following result a recursion to compute $T_m$ by avoiding additional tensor products with $\cA$.
\begin{proposition}(\cite{heyouninew})
	\label{Tm_ext}
	Let $\overline{H}_m = [h_{:,1}, \ldots,h_{:,m}] \in \R^{2(m+1) \times 2m}$ be the upper Hessenberg matrix generated from Algorithm \ref{tenextgloalgo} where $h_{:,i} \in \R^{2(m+1)\times 1}$ is the $ith$-column. A simple computation of the upper Hessenberg matrix $\overline{T}_m=[t_{:,1}, \ldots,t_{:,m}]\in \R^{2(m+1) \times 2m}$ is as follows.\\ The odd columns of the $\overline{T}_m$ are such that 
	$$t_{:,2j-1} = h_{:,2j-1}, \quad \text{for} \,\, j=1,\ldots,m,$$
	and the even column are described as
	\begin{itemize}
		\item[-] $t_{:,2} = \frac{1}{\omega_{22}}(\omega_{11}e_1^{2(m+1)}-\omega_{12}h_{:,1}),$
		\item [-] $t_{:,2j+2} = \frac{1}{h_{2j+2,2j}}(e_{2j}^{2(m+1)}-t_{:,1:2j+1}h_{1:2j+1,2j}) \quad \text{for} \,\, j=1,\ldots,m-1$,
	\end{itemize} 
	where $e_i^{(k)}$ is the $ith $ column vector of the identity matrix $I_k$ and $\omega_{1,1}, \omega_{1,2}$ and $\omega_{2,2}$ are as defined in step $2$ from Algorithm \ref{tenextgloalgo}.
\end{proposition}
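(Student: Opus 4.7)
The plan is to start from the TEGA recursions and derive each family of columns of $\overline{T}_m$ by extracting the appropriate sub-block from the block form of the Arnoldi relation, then projecting onto the orthonormal basis. The key observation is that $\overline{H}_m$ stores Gram-Schmidt coefficients of $|\cA\ast\cV_j^1,\,\cA^{-1}\ast\cV_j^2|_2$, whereas $\overline{T}_m$ needs projections of $\cA\ast\ctV_{2m}$; the first sub-block matches directly, while the second sub-block requires inverting the action of $\cA^{-1}$ using the Arnoldi identity itself.

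I would first make the block structure explicit. Writing $H_{i,j}=\bigl[\begin{smallmatrix}h_{2i-1,2j-1}&h_{2i-1,2j}\\h_{2i,2j-1}&h_{2i,2j}\end{smallmatrix}\bigr]$ and using Proposition \ref{rowblocktimesrowblock} (item 2), the recursion of Algorithm \ref{tenextgloalgo} becomes
\begin{equation*}
|\cA\ast\cV_j^1,\;\cA^{-1}\ast\cV_j^2|_2 \;=\; \sum_{i=1}^{j+1}\cV_i \ast (H_{i,j}\circledast\cI_K),
\end{equation*}
which splits into the two identities
\begin{equation*}
\cA\ast\cV_j^1=\sum_{i=1}^{j+1}\!\bigl(h_{2i-1,2j-1}\cV_i^1+h_{2i,2j-1}\cV_i^2\bigr),\qquad
\cA^{-1}\ast\cV_j^2=\sum_{i=1}^{j+1}\!\bigl(h_{2i-1,2j}\cV_i^1+h_{2i,2j}\cV_i^2\bigr).
\end{equation*}
For the odd columns I would apply $\ctV_{2(m+1)}^T\diamond$ to the first identity and invoke the orthonormality $\cV_i^T\diamond\cV_k=\delta_{ik}I_2$ together with the Hessenberg property ($h_{k,2j-1}=0$ for $k>2j$), yielding immediately $t_{:,2j-1}=h_{:,2j-1}$.

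For the even columns the essential trick is to multiply the second identity on the left by $\cA$, producing $\cV_j^2=\sum_{i=1}^{j+1}(h_{2i-1,2j}\,\cA\ast\cV_i^1+h_{2i,2j}\,\cA\ast\cV_i^2)$. Projecting with $\ctV_{2(m+1)}^T\diamond$, the left side becomes $e_{2j}^{2(m+1)}$ (by orthonormality) and the right side becomes $\sum_{k=1}^{2j+2}h_{k,2j}\,t_{:,k}$, again using the Hessenberg truncation. Isolating the last term, whose coefficient $h_{2j+2,2j}$ is a diagonal entry of the triangular QR factor $H_{j+1,j}$ and hence nonzero barring breakdown, gives the stated recursion $t_{:,2j+2}=\tfrac{1}{h_{2j+2,2j}}\bigl(e_{2j}^{2(m+1)}-t_{:,1:2j+1}\,h_{1:2j+1,2j}\bigr)$. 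The base case $t_{:,2}$ follows from the same argument applied one step earlier: from the initial QR $|\cB,\cA^{-1}\ast\cB|_2=\cV_1\ast(\omega\circledast\cI_K)$ I would solve for $\cV_1^2$, apply $\cA$, substitute $\cB=\omega_{11}\cV_1^1$, and project, obtaining the claimed formula.

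The only genuinely delicate part will be bookkeeping: aligning the $2\times 2$ block-Hessenberg indexing of $\overline{H}_m$ with the ``flat'' column indexing $h_{:,k}$, and making the split into $\cV_j^1,\cV_j^2$ sub-blocks commute properly with the $\circledast$-product appearing in the Arnoldi relation. Once Proposition \ref{rowblocktimesrowblock} is used to rewrite $\cV_i\ast(H_{i,j}\circledast\cI_K)$ in $|\cdot,\cdot|_2$ form, the remaining manipulations are purely linear-algebraic and identical to the matrix extended-Arnoldi argument of \cite{heyouninew}, so no new tensor identities are needed.
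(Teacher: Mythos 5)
The paper itself gives no proof of this proposition; it is stated as imported from \cite{heyouninew}, so there is nothing internal to compare against. Your derivation is correct and is precisely the standard argument for that result: split the TEGA recursion $|\cA\ast\cV_j^1,\;\cA^{-1}\ast\cV_j^2|_2=\sum_{i=1}^{j+1}\cV_i\ast(H_{i,j}\circledast\cI_K)$ into its two sub-columns, read off the odd columns of $\overline{T}_m$ directly, and obtain the even columns by applying $\cA$ to the second identity and solving for the highest-index unknown, whose coefficient $h_{2j+2,2j}$ is a diagonal entry of the triangular QR factor and hence nonzero absent breakdown; the base case follows the same way from $|\cB,\,\cA^{-1}\ast\cB|_2=\cV_1\ast(\omega\circledast\cI_K)$. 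One small slip: the Hessenberg/triangularity statement for odd columns should read $h_{k,2j-1}=0$ for $k>2j+1$ (the entry $h_{2j+1,2j-1}$ is generically nonzero; only $h_{2j+2,2j-1}$ vanishes because $H_{j+1,j}$ is upper triangular), but this does not affect the conclusion since $t_{:,2j-1}=h_{:,2j-1}$ follows from orthonormality alone.
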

\section{Order reduction via projection}
\label{secreduproj}
Based on the tensor classic and  extended Krylov subspace processes described above, we are now able to provide a reduction process to get a reduced MLTI system to the original one (\ref{oryml}).
We recall our original MLTI system 
\begin{equation}
	\left\{\begin{split} 
		\cX_{k+1} &= \cA\ast\cX_k+ \cB \ast \cU_k, \quad \cX_0=0,\\ 
		\cY_k &= \cC \ast \cX_k,
	\end{split} \right.
\end{equation}
where the tensors $\cA \in \R^{\JJF}$ and  $\cB, \cC^T \in \R^{\JKF}$. For simplicity, we consider product $\ast$ as the Einstein product $\ast_2$ between $4th$-order tensors. The associated transfer function mentioned previously in Proposition \ref{TFten} is given as
\begin{equation}
	\cF(s) = \cC \ast (s\cI-\cA)^{-1} \ast \cB.
\end{equation}
We seek to construct the following reduced MLTI system
\begin{equation}
	\label{reduMLTI}
	\left\{\begin{split} 
		\hat{\cX}_{k+1} &= \hat{\cA}\ast\hat{\cX}_k+ \hat{\cB} \ast \hat{\cU}_k, \quad \cX_0=0,\\ 
		\hat{\cY}_k &= \hat{\cC} \ast \hat{\cX}_k,
	\end{split} \right.
\end{equation}
by using Proposition \ref{TFten}, we can describe the associated transfer function as follows
$$\hat{\cF}(s) = \hat{\cC} \ast (s\hat{\cI}-\hat{\cA})^{-1} \ast \hat{\cB}.$$
In order to measure the accuracy of the resulting reduced system, one can compute the error-norm $\|\cF -\hat{\cF}\|$   with respect to a certain norm. Similarly to reduction process described in for the matrix case, an analogue process using a tensorial structure could be given. We first consider one of the Krylov subspace defined in the previous section, the tensor classic or extended  Krylov subspaces, where we project the initial MLTI system. The two subspaces are of $m$-dimension and $2m$-dimension respectively. We consider tensor $\cV_m$ as the corresponding basis tensor to one of the tensor Krylov subspace. After approximating the full order state $\cX_k$ by $\cV_m \ast \hat{\cX}_k$, and analogously to the matrix case, by using the Petrove-Galerkin condition technique, we end up with the following 
\begin{equation}
	\left\{\begin{split} 
		\cV_m^T\ast (\cV_m\ast\hat{\cX}_{k+1} &-\cA\ast\cV_m\ast \hat{\cX}_k - \cB \ast \cU_k)=0,\\ 
		\cY_k &= \cC \ast \cV_m \ast \hat{\cX}_k,
	\end{split} \right.
\end{equation}
finally, we obtain the desired reduced MLTI system (\ref{reduMLTI}), with the following tensorial structure
\begin{equation}
	\label{structtensredu}
	\hat{\cA} = \cV_m^T\ast(\cA \ast\cV_m), \quad \hat{\cB} = \cV_m^T\ast\cB, \quad \hat{\cC} =\cC \ast\cV_m.
\end{equation}
In the following we give explicitly the expression of the tensorial structure (\ref{structtensredu}) by means of the two processes based on the two Krylov subspaces already mentioned. To reach the desired goal, we need to find an approximation $\hat{\cF}$ to $\cF$, and this comes back to find an approximation $\cX_m$ to $\cX$ that verifies the multi-linear system (\ref{mlsys}) in the tensor classic Krylov subspace, {\it i.e.,} $\cX_m \in \cK_m(\cA, \cB)$,  or in the tensor extended Krylov subspace {\it i.e.,} $\cX_m \in \cK^e_m(\cA, \cB)$. The process to find such approximation based on a tensor classic Krylov subspace  (\ref{t_ks}) is described as follows. 
\begin{equation}
	\cX_m = \ctV_m \ast (y_m \circledast \cI_K) \in \R^{\JKF},
\end{equation} 
\begin{equation}
	\label{t_resi}
	\cR_m = \cB -(s\cI-\cA) \ast \cX_m \, \perp \, \cK_m(\cA, \cB),
\end{equation} 
%$$\cX_m = \bV_m \times_{(M+N+1)} y_m,$$
where $y_m$ is a vector with $m$-components and  $\ctV_m$ is the orthonormal basis associated to the tensor Krylov subspace (\ref{t_ks}). We know from the decomposition (\ref{decomp}) that
$$\cA \ast \ctV_m = \ctV_m \ast (H_m \circledast \cI_K) + h_{m+1,m}\cV_{m+1}\ast (e_m^T \circledast \cI_K),$$
and by using the fact $\cB=\|\cB\| \cV_1$ and Proposition \ref{ten_prop}, the equality (\ref{t_resi}) could be simplified as follows 
%\begin{equation*}
%		\bV_m \boxtimes^{(M+N+1)} (B-(s\cI-\cA)\ast (\bV_m \times_{(M+N+1)} y_m) )  = 0, 
%\end{equation*}
\begin{equation*}
	\begin{split}
		\ctV_m^T \diamond (\cB-(s\cI-\cA)\ast (\ctV_m \ast (y_m \circledast \cI_K))  &= 0, \\
		\ctV_m^T \diamond \cB &= \ctV_m^T \diamond ((s\ctV_m -\cA\ast \ctV_m)\ast(y_m \circledast \cI_K)), \\
		\|\cB\| e_1^m&= (sI_m-H_m)y_m , \\
		y_m&=(sI_m-H_m)^{-1}\|\cB\|e_1^m.
		%		\|B\|_F e_1 &= sy_m -H_my_m, \,\,\, \text{(using Proposition (\ref{t_prop1}))} \\
		%		y_m&= (sI_m-Hm)^{-1} \|B\|_F e_1.
	\end{split}
\end{equation*}
\noindent
Now, we can have an explicit expression of the approximation of $\cF(s)$ denoted by $\cF_m(s)$ and described as 
\begin{equation}
	\begin{split}
		\cF(s) \approx \hat{\cF}(s) = \cC \ast \cX_m &= \cC \ast(\ctV_m \ast (y_m \circledast \cI_K)) \\
		&=  \cC \ast(\ctV_m \ast (((sI_m-H_m)^{-1}\|\cB\|e_1^m) \circledast \cI_K)) \\
		&= \cC \ast\ctV_m \ast (s\cI_K^m-(H_m\circledast \cI_K))^{-1}\ast(\|\cB\|e_1^m\circledast \cI_K),
	\end{split}
\end{equation}
where $\cI_K^m \in \R^{\KKm}$ is the identity tensor. Now, from the approximation $\hat{\cF}(s)$, we can conclude the reduced MLTI system 
\begin{equation}
	\label{redmld}
	\left\{\begin{split} 
		\hat{\cX}_{k+1} &= \hat{\cA}\ast\hat{\cX}_k+ \hat{\cB} \ast \cU_k,\\% \quad (\text{discrete case})\\ 
		\hat{\cY}_k &= \hat{\cC} \ast \hat{\cX}_k, 
	\end{split} \right.
\end{equation}
with the associated tensor structure
\begin{equation}
	\hat{\cA}=H_m\circledast \cI_K, \quad \hat{\cB}=\|\cB\|e_1^m\circledast \cI_K, \quad \hat{\cC}= \cC \ast\ctV_m. 
\end{equation}
In order to quantify how well the approximated transfer function $\hat{\cF}(s)$ is closer to the original one $\cF(s)$, we propose in the following a simplified expression to the error-norm $\|\cF-\hat{\cF}\|$.
\begin{proposition}
	Let $\cF(s)$ and $\hat{\cF}(s)$ be the two transfer functions associated to the original MLTI (\ref{oryml}) and the reduced one (\ref{redmld}) respectively. Based on the previous results, we get
	\begin{equation}
		\|\cF(s)-\hat{\cF}(s)\| \leq \|\cC\ast \ctV_m \ast(s\cI-\cA)^{-1}\| \ast \|h_{m+1,m}((e_m^T(sI_m-H_m)^{-1}) \circledast \cI_K)\ast \cB_m\|.
	\end{equation}
\end{proposition}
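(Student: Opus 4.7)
The plan is to express the error $\cF(s)-\hat{\cF}(s)$ through the Krylov residual and then apply sub-multiplicativity of the Frobenius norm under the Einstein product. First, I would set $\cX=(s\cI-\cA)^{-1}\ast\cB$ and $\hat{\cX}=\ctV_m\ast(y_m\circledast\cI_K)$ with $y_m=(sI_m-H_m)^{-1}\|\cB\|e_1^m$, so that $\cF(s)=\cC\ast\cX$ and $\hat{\cF}(s)=\cC\ast\hat{\cX}$. Writing the residual $\cR_m=\cB-(s\cI-\cA)\ast\hat{\cX}$, we have
\begin{equation*}
\cX-\hat{\cX}=(s\cI-\cA)^{-1}\ast\cR_m,
\end{equation*}
hence $\cF(s)-\hat{\cF}(s)=\cC\ast(s\cI-\cA)^{-1}\ast\cR_m$.

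Next, I would simplify $\cR_m$. Substituting the Arnoldi decomposition
\begin{equation*}
\cA\ast\ctV_m=\ctV_m\ast(H_m\circledast\cI_K)+h_{m+1,m}\,\cV_{m+1}\ast(e_m^T\circledast\cI_K)
\end{equation*}
and using the distributivity properties of $\circledast$ from Proposition \ref{ten_prop}, the quantity $(s\cI-\cA)\ast\hat{\cX}$ splits into $\ctV_m\ast(((sI_m-H_m)y_m)\circledast\cI_K)-h_{m+1,m}\cV_{m+1}\ast((e_m^T y_m)\circledast\cI_K)$. The defining identity $(sI_m-H_m)y_m=\|\cB\|e_1^m$ together with $\cB=\|\cB\|\cV_1$ cancels the first piece against $\cB$, leaving
\begin{equation*}
\cR_m=h_{m+1,m}\,\cV_{m+1}\ast\bigl((e_m^T(sI_m-H_m)^{-1}\|\cB\|e_1^m)\circledast\cI_K\bigr).
\end{equation*}
Recognising $\|\cB\|e_1^m\circledast\cI_K$ as the reduced input tensor $\cB_m$ of the projected system (up to the appropriate block identification in $\R^{\KKm}$), this reduces to $\cR_m=h_{m+1,m}\,\cV_{m+1}\ast\bigl((e_m^T(sI_m-H_m)^{-1})\circledast\cI_K\bigr)\ast\cB_m$.

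Finally, I would combine the two pieces. Substituting back into the expression for $\cF(s)-\hat{\cF}(s)$, taking the Frobenius tensor norm defined in (\ref{tennorm}), and using the sub-multiplicative inequality $\|\cU\ast\cV\|\leq\|\cU\|\,\|\cV\|$ for the Einstein product (which follows from the standard Cauchy--Schwarz bound applied to the Einstein contraction, equivalently through the isomorphism $\Psi$), together with $\|\cV_{m+1}\|=1$ coming from the orthonormality relations (\ref{orthcond}), yields the announced estimate.

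The main obstacle is the careful bookkeeping of the mixed products $\circledast$ and $\ast$ when simplifying $\cR_m$: one must verify that the Arnoldi relation and $(sI_m-H_m)y_m=\|\cB\|e_1^m$ recombine cleanly under the rules of Proposition \ref{ten_prop}, and that the scalar factor $h_{m+1,m}$ factors out coherently from the $\circledast$ product. Once these identities are stated, the norm estimate itself is a routine application of sub-multiplicativity and orthonormality.
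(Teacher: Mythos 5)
Your proposal is correct and follows essentially the same route as the paper: both factor the error as $\cC\ast(s\cI-\cA)^{-1}$ applied to the residual $\cB-(s\cI-\cA)\ast\hat{\cX}$, collapse that residual via the Arnoldi decomposition (\ref{decomp}) and the reduced-system identity to the single term $h_{m+1,m}\,\cV_{m+1}\ast((e_m^T(sI_m-H_m)^{-1})\circledast\cI_K)\ast\cB_m$, and then take norms. Your write-up is in fact slightly more complete than the paper's, since you make explicit the final sub-multiplicativity and orthonormality step that the paper leaves implicit.
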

\begin{proof}
	The error $\cF(s)-\hat{\cF}(s)$ could be expressed as
	\begin{equation*}
		\begin{split}
			\cF(s)-\hat{\cF}(s) &=\cC\ast(s\cI-\cA)^{-1} \ast \cB- \cC_m \ast (s\cI_K^m-\cA_m)^{-1}\ast\cB_m \\
			&= \cC\ast(s\cI-\cA)^{-1}\| \ast \left[\cB-(s\cI-\cA) \ast \ctV_m\ast (s\cI_K^m-\cA_m)^{-1}\ast\cB_m\right],
			%&= \cC\ast(s\cI-\cA)^{-1} \ast \left[\cB-\ctV_m\ast B_m 
			%+h_{m+1,m} \cV_{m+1} \ast(e_m^T\circledast \cI_K)\ast (s\cI_K^m-\cA_m)^{-1}\ast\cB_m\right]
		\end{split}
	\end{equation*}
	by using the decomposition (\ref{decomp}), we obtain
	\begin{equation*}
		\begin{split}
			\cF(s)-\hat{\cF}(s)&= \cC\ast(s\cI-\cA)^{-1} \ast \left[\cB-\underbrace{\ctV_m\ast B_m}_{\cB} 
			+h_{m+1,m} \cV_{m+1} \ast(e_m^T\circledast \cI_K)\ast (s\cI_K^m-\cA_m)^{-1}\ast\cB_m\right] \\
			&= \cC\ast(s\cI-\cA)^{-1} \ast \left[h_{m+1,m}\cV_{m+1} \ast((e_m^T(sI_m-H_m)^{-1}) \circledast \cI_K)\ast \cB_m\right],
		\end{split}
	\end{equation*}
	this concludes the proof.
\end{proof}
If we consider the tensor extended global Krylov subspace $\cK^e_m(\cA, \cB)$ (\ref{tenextkrysub}), we  can get the approximated transfer function $\hat{\cF}$ by following the same process described above based on the tensor classic global Krylov subspace, then, the approximated $\hat{\cF}$ transfer function has the following form
\begin{equation}
	\label{tfredumlti_ext}
	\hat{\cF}(s)=\hat{\cC} \ast (s\cI_{2m}-\hat{\cA})^{-1}\ast\hat{\cB},
\end{equation}
with the associated structure matrices
\begin{equation}
	\label{tenstruredu}
	\hat{\cA}=T_m\circledast \cI_K, \quad \hat{\cB}=(\ctV_{2m}^T\diamond \cB)\circledast \cI_K, \quad \hat{\cC}= \cC \ast\ctV_{2m}. 
\end{equation}
From Algorithm \ref{tenextgloalgo}, we know that 
\begin{itemize}
	\item $|\cB, \, \cA^{-1}\ast \cB|_2= \cV_1 \ast (\omega \circledast \cI_K)$ where $\omega \in \R^{2\times 2}$ is an upper triangular matrix from Algorithm \ref{tenextgloalgo}, then,  $$\cB_m=(\omega_{1,1}e_1^{(2m)})\circledast \cI_K,$$ where $e_1^{(2m)}$ is the first column of the identity matrix $I_{2m}$.\\
	\item $T_m= \ctV_{2m}^T \diamond (\cA\ast \ctV_{2m})$ a $2m\times 2m$ is a Hessenberg matrix that can be computed in a efficient way using Proposition \ref{Tm_ext} and $\ctV_{2m}$ is the tensor  computed by  Algorithm \ref{tenextgloalgo}.
\end{itemize}
\begin{remark}
	It is worth noting that in the case of a continuous-time MLTI system described as 
	\begin{equation}
		\left\{\begin{split} 
			\dot{\cX}(t) &= \cA\ast\cX(t)+ \cB \ast \cU(t), \quad \cX_0=0,\\ 
			\cY(t) &= \cC \ast \cX(t), 
		\end{split} \right.
	\end{equation}
	the process described above for constructing a discrete-time reduced MLTI system can be also applied  to the continuous-time MLTI system, we end up by the following reduced system
	\begin{equation}
		%\label{redmlc}
		\left\{\begin{split} 
			\dot{\cX_m}(t) &= \cA_m\ast\cX_m(t)+ \cB_m \ast \cU_t,\\ 
			\cY_m(t) &= \cC_m \ast \cX_m(t), 
		\end{split} \right.
	\end{equation}
	where the coefficient tensorial are as follows
	\begin{equation}
		\cA_m = \cV_m^T\ast(\cA \ast\cV_m), \quad \cB_m = \cV_m^T\ast\cB, \quad \cC_m =\cC \ast\cV_m.
	\end{equation}
\end{remark}
\section{Tensor Balanced Truncation}
\label{BTsec}
Besides Krylov subspace model reduction methods for large scale LTI systems, another class of methods is the known Balanced Truncation method (BT) introduced by Moore \cite{moore}. The main challenge in the BT is to solve large-scale discrete-time Lyapunov equations in order to obtain the system Gramians that will be used to generate a reduced model. In this section, we provide with a tensor Balanced Truncation method. The process of this method is as follows. First, we need to solve two discrete Lyapunov equations 
\begin{equation}
	\label{lyapequaBT}
	\cP - \cA \ast \cP \ast \cA^T= \cB \ast \cB^T, \qquad \cQ - \cA^T \ast \cQ \ast \cA= \cC^T \ast \cC, 	
\end{equation} 
where $\cA \in \R^{\JJF}$ and $\cB , \cC^T  \in \R^{\JKF} $. As mentioned before, $\cP$ and $\cQ$ are known as the reachability and  the observability Gramians. It is mentioned previously, that $\cP$ and $\cQ$ are weakly symmetric positive-definite square tensor, then we can obtain the Cholesky-like factors of the two gramians described as follows
\begin{equation}
	\cP = \cZ \ast \cZ^T, \qquad \cQ=  \cW \ast \cW^T.
\end{equation}
Where the tensors $\cZ$ and $\cW$ are of appropriate dimension presented in low-rank form. The next step consists in  computing  the   singular value decomposition (SVD) of  $\Psi(\cW^T \ast \cZ) =U\Sigma V^T$, where $\Psi$ is the isomorphism defined in (\ref{transphi}). A reasonable computation is required behind this step since tensor $\cW$ and $\cZ$ are not of large dimensions. The matrix $\Sigma$ is a diagonal matrix that contains the singular values known also as the {\it Hankel singular values} of the associated MLTI system \cite{CAN3}. This decomposition could be expressed as follows
\begin{equation}
	\begin{split}
		\Psi(\cW^T \ast \cZ) &=U\Sigma V^T \\
		&= \begin{bmatrix}
			U^1\,\, U^2
		\end{bmatrix}\begin{bmatrix}
			\Sigma^1	&   \\
			& \Sigma^2 
		\end{bmatrix} \begin{bmatrix}
			(V^1)^T \\
			(V^2)^T
		\end{bmatrix},
	\end{split}
\end{equation}
where $\Sigma_1\in \R^{r^2 \times r^2}$ and $\Sigma_2\in \R^{(N^2-r^2) \times (N^2-r^2)}$. As described in \cite{Antoulas3,stykel,moore}, a truncation step could be established, and by truncating the states that  correspond to small Hankel singular values in $\Sigma_2$. The tensorial structure of the reduced MLTI system is given as follows
\begin{equation}
	\cA_r= \cY_r^T \ast (\cA \ast \cX_r), \quad \cB_r = \cY_r^T \ast \cB, \quad \cC_r= \cC \ast \cX_r,  
\end{equation} 
with
\begin{equation}
	\cY_r= \cW_2 \ast \Psi^{-1}(U^1)\ast \Psi^{-1}(\Sigma_1)^{(-1/2)}, \qquad \cX_r= \cZ_1 \ast \Psi^{-1}(V^1)\ast \Psi^{-1}(\Sigma_1)^{(-1/2)}, 
\end{equation}
where $\Psi^{(-1)}$ is the inverse of the isomorphism defined in (\ref{transphi}). Regarding the solutions $\cP$ and $\cQ$ of the two Lyapunov equations (\ref{lyapequaBT}), we suggest a solution in a factored form via the tensor extended block and global Krylov subspace projection methods. These two methods are the focus of the upcoming section.
\section{Tensor disctete-time Lyapunov equations}
\label{disclyap}
In this section we discuss the process to get approximate solutions to the tensor discrete Lyapunov equations of the form
\begin{equation}
	\label{lyaptenequa}
	\cX -\cA\ast \cX \ast \cA^T = \cB \ast \cB^T,
\end{equation}% even for an order higher than four
where $\cA ,\cB$ are  tensors of appropriate dimensions and $\cX$ is the unknown tensor. The solution of such equation is  the main task in Balanced Truncation model order reduction method for MLTI systems described in \cite{CAN3}. They also play a crucial role in control theory, for instance, the equation (\ref{lyaptenequa}) is what we get after a discretization of the $2D$ heat equations with control \cite{CAN3,Nip}. It is obvious that, if the  dimension of (\ref{lyaptenequa}) is small, then one can transform (\ref{lyaptenequa}) to a matrix  Lyapunov equation and using some efficient direct   techniques  described in \cite{Barraud,heyouni0,simon}. But in the case of large scale tensors, choosing a process based only on tensors will  be of benefits than using some matricization techniques that will cost us both computationally and in terms of memory.  In the next section, we propose a  process to solve discrete Lyapunov tensor equations by means of tensor Krylov subspace techniques. We use the global process described previously which is based on a tensor classic or extended Krylov subspace. Moreover, a block process by means of the tensor extended block Krylov subspace for solving (\ref{lyaptenequa}) is also presented. A comparison between all the methods is assessed in the numerical section.
\subsection{Tensor global Krylov method for discrete-Lyapunov equation}
We recall the tensor Lyapunov equation that we are interested in
\begin{equation}
	\label{lyaptenequa2}
	\cX -\cA\ast \cX \ast \cA^T = \cB \ast \cB^T,
\end{equation}
where $\cA \in \R^{\JJF}$, $\cB \in \R^{\KK}$ and $\cX \in \R^{\JJF}$ is the unknown tensor. Analogously to the matrix discrete-Lyapunov equation \cite{Barraud}, we state that (\ref{lyaptenequa2}) has a unique solution if the following is satisfied
$$\lambda_{j_1j_2j_1j_2} \, \lambda_{j_1j_2j_1j_2} \neq 1, \quad j_1=1,\ldots,J_1; \,\,j_2=1,\ldots,J_2 $$
where $\lambda_{j_1j_2j_1j_2}$ are the eigenvalues of $\cA$.
\subsubsection{The extended process}
In the following we describe the case of constructing the approximation by using a projection onto the tensor extended global Krylov subspace defined in (\ref{tenextkrysub}). The approximate solution is given by

\begin{equation}
	\label{tenapproext}
	\cX_m = \ctV_{2m} \ast (Y_{2m} \circledast \cI_K) \ast \ctV_{2m}^T,
\end{equation} 
where $\ctV_{2m}$ is  a tensor obtained  after running  Algorithm \ref{tenextgloalgo} to the pair ($\cA,\cB$),  $\cI_{2m} \in \R^{K_2\times 2mK_2\times K_1\times 2mK_2}$ is the identity tensor and the product $\circledast$ is as defined in Definition \ref{npd}. $Y_{2m}\in \R^{2m\times 2m}$ is a small matrix obtained via the following Galerkin orthogonality condition 
\begin{equation}
	\label{approcond}
	\ctV_{2m}^T \diamond(\cR_m \diamond \ctV_{2m}) = 0,
\end{equation}
where $\cR_m$ is the tensor  residual   given by $\cR_m = \cX_m - \cA \ast \cX_m\ast \cA^T - \cB \ast \cB^T$. By developing the condition described in (\ref{approcond}), we get the low order  discrete Lyapunov matrix equation verified by $Y_{2m}$, which can be expressed as follows
\begin{equation}
	\label{lowdimsyl_ten_ext}
	Y_{2m}-T_m Y_{2m} T_m^T=B_mB_m^T,
\end{equation}
where $T_m\in \R^{2m\times 2m}$ is a Hessenberg matrix obtained from Algorithm \ref{tenextgloalgo} applied to the pair ($\cA,\cB$) and  $B_m = \ctV_{2m}^T \diamond \cB$. A  simplification to $B_m$ goes as 
\begin{equation*}
	B_m=\ctV_{2m}^T \diamond \cB = \omega_{11}e_1^{(2m)}, \quad \text{since} \,\, \cB=\omega_{11} \cV_1^1 \, \, (\text{from Algorithm \ref{tenextgloalgo}})
\end{equation*}
where $\cV_1^1$ is  defined using the  MATLAB notation as $\cV_1^1=\cV_1(:,:,:,1:K_2)$, $\omega_{11}$ is a scalar defined in Step $2$ of  Algorithm \ref{tenextgloalgo} and $e_1^{(2m)}$ is the first column of the identity matrix $I_{2m} \in \R^{2m \times 2m}$. The following result shows how to compute the residual in an efficient way.
\begin{theorem1}
	\label{theo_tensor_ext}
	Assume  that $m$ iterations of Algorithm \ref{tenextgloalgo} have been run. Then 
	\begin{equation}
		\|\cR_m\| \leq r_m,		
	\end{equation}
	where \begin{equation*}%(h_{m+1,m})^2\|e_m^TY_mH_m^T\|_2^2 + 
		r_m = \sqrt{2 \|T_mY_{2m}E_{(2m)}^mT_{m+1,m}^T\|_F^2 +\|(Y_{2m}E_{(2m)}^mT_{m+1,m}^T)^TY_{2m}E_{(2m)}^mT_{m+1,m}^T\|_F^2}.
	\end{equation*}
	$E_{(2m)}^m$ is the last $2m \times 2$ column block of the identity matrix $I_{2m}$.
\end{theorem1}
\begin{proof}
	By using the definition of the approximation $\cX_m$ and the decompositions that we get from Algorithm \ref{tenextgloalgo}, we end up with the following
	\begin{equation}
		\begin{split}
			\cR_m &= \cX_m - \cA \ast \cX_m \ast \cA^T - \cB \ast \cB^T\\
			&= \ctV_{2m} \ast (Y_{2m} \circledast \cI_{2m}) \ast \ctV_{2m}^T - \underbrace{\cA \ast (\ctV_{2m} \ast (Y_{2m} \circledast \cI_{2m}) \ast \ctV_{2m}^T) \ast \cA^T}_{(1)} - \underbrace{\cB\ast\cB^T}_{(2)}.
		\end{split}
	\end{equation}
	The term $(1)$ could be simplified as follows
	{\small
		$$
		\begin{array}{lll}
			\cA \ast (\ctV_{2m} \ast (Y_{2m} \circledast \cI_K) \ast \ctV_{2m}^T) \ast \cA^T &=& (\ctV_{2m} \ast (T_mY_{2m} \circledast \cI_K) + \cV_{2(m+1)}\ast (T_{m+1,m}(E_{(2m)}^m)^TY_{2m} \circledast \cI_K)) \\ &\ast& (  (T_m^T \circledast \cI_K) \ast \ctV_{2m}^T +  (E_{(2m)}^mT_{m+1,m}^T \circledast \cI_K)\ast \cV_{2(m+1)}^T)\\
			&=&\ctV_{2m} \ast (T_mY_{2m}T_m^T \circledast \cI_K) \ast \ctV_{2m}^T\\
			&+& \ctV_{2m} \ast (T_mY_{2m}E_{(2m)}^mT_{m+1,m}^T\circledast \cI_K)\ast \cV_{2(m+1)}^T \\
			&+&\cV_{2(m+1)}\ast (T_{m+1,m}(E_{(2m)}^m)^TY_{2m}T_m^T \circledast \cI_K)\ast \ctV_{2m}^T \\
			&+& \cV_{2(m+1)}\ast (T_{m+1,m}(E_{(2m)}^m)^TY_{2m}E_{(2m)}^mT_{m+1,m}^T \circledast \cI_K)\ast\cV_{2(m+1)}^T.
		\end{array}
		$$}
	From Algorithm \ref{tenextgloalgo}, we know that $\cB=\omega_{1,1} \cV_1^1$, where $\cV_1^1$ is  defined using the  MATLAB notation as $\cV_1^1=\cV_1(:,:,:,1:K_2)$ and $\omega_{1,1}$ is a scalar defined in Step $2$ of  Algorithm \ref{tenextgloalgo},  then the term $(2)$ could be written as
	$$\cB \ast \cB^T = (\omega_{1,1})^2 \ctV_{2m} \ast (e_1^{(2m)} (e_1^{(2m)})^T \circledast \cI_K) \ast\ctV_{2m}^T.$$
	Gathering all these results, and by considering $Y_{2m}-T_mY_{2m}T_m^T -\omega_{1,1}^2(e_1^{(2m)} (e_1^{(2m)})^T)=0$ mentioned in (\ref{lowdimsyl_ten_ext}), we end up with
	\begin{equation*}
		\cR_m= \ctV_{2(m+1)} \ast\left[\begin{pmatrix}
			0	& -T_mY_{2m}E_{(2m)}^mT_{m+1,m}  \\
			-T_{m+1,m}^T(E_{(2m)}^m)^TY_{2m}T_m^T	& T_{m+1,m}(E_{(2m)}^m)^TY_{2m}E_{(2m)}^mT_{m+1,m}^T 
		\end{pmatrix} \circledast \cI_K\right] \ast \ctV_{2(m+1)}^T.
	\end{equation*}
	It follows that 
	$$\|\cR_m\| \leq \sqrt{2 \|T_mY_{2m}E_{(2m)}^mT_{m+1,m}^T\|_F^2 +\|(Y_{2m}E_{(2m)}^mT_{m+1,m}^T)^TY_{2m}E_{(2m)}^mT_{m+1,m}^T\|_F^2},$$
	which ends the proof.
\end{proof}
\begin{remark}
	The  approximation $\cX_m$ could be expressed in a factored form. This allows us to compute $\cX_m$ in a efficient way. We start by employing a Singular Value Decomposition (SVD) to $Y_{2m}$, {\it i.e.,} $Y_{2m}=P\Sigma Q$, then we consider some tolerance {\tt dtol} and define $P_r$, $Q_r$ as the first $r$ columns of $P$ and $Q$ corresponding respectively to the $r$ singular values of magnitude greater than {\tt dtol}. By setting $\Sigma_r=\text{diag}(\sigma_1\ldots,\sigma_r)$, we get $Y_{2m} \approx P_r\Sigma_rQ_r^T$ and the factorization goes as 
	\begin{equation}
		\label{keyten_facto}
		\cX_m \approx \ctV_{2m} \ast (P_r\Sigma_rQ_r^T \circledast \cI_K) \ast \ctV_{2m}^T\approx \cZ_1\ast \cZ_2^T,
	\end{equation}
	where $\cZ_1 = \ctV_{2m} \ast (P_r(\Sigma_r)^{1/2} \circledast \cI_K) $ and $\cZ_2=((\Sigma_r)^{1/2}Q_r^T \circledast \cI_K) \ast \ctV_{2m}^T$.
	The same process could be followed to get the approximation in a factored form using the global extended tensor Krylov subspace, in this case $\cZ_1 = \ctV_{2m} \ast (P_r(\Sigma_r)^{1/2} \circledast \cI_K) $ and $\cZ_2=((\Sigma_r)^{1/2}Q_r^T \circledast \cI_K) \ast \ctV_{2m}^T$.
\end{remark}
All the results described in this section enable us to provide with  the following algorithm
\begin{algorithm}[H]
	\caption{ The Lyapunov tensor extended  global Arnoldi algorithm (TLTEGAA)}
	\label{TLETGAA}
	\begin{enumerate}		
		\item  Inputs: ~$\cA\in \R^{\JJF}$,  $\cB\in \R^{\JKF}$, tolerances $\epsilon$, {\tt dtol} and a number $m_{max}$ of maximum iterations.
		\item  Outputs: the approximate solution $\cX_m \approx \cZ_1 \ast\cZ_2^T$.
		\item For $m=1, \cdots, m_{max}$ 
		\item Use Algorithm \ref{tenextgloalgo} to built $\ctV_{2m}$ an orthonormal tensor associated to the tensor extended Krylov subspace (\ref{tenextkrysub}) and $H_m \in \R^{2m \times 2m}$ a Hessenberg matrix. We use Proposition \ref{Tm_ext} to compute the Hessenberg matrix $T_m\in \R^{2m \times 2m}$.
		\item Solve the low-dimensional disctete Lyapunov equation (\ref{lowdimsyl_ten_ext}) using the MATLAB function {\tt dlyap}.
		\item Compute the residual norm $\|\cR_m\|$ using Theorem \ref{theo_tensor_ext}, and if it is less than $\epsilon$, then
		\begin{enumerate}
			\item compute the SVD of $Y_{2m}= P \Sigma Q^T$ where $\Sigma= diag [\sigma_1, \cdots, \sigma_{2m}],$
			\item determine $r$ such that $\sigma_{r+1} < {\tt dtol} \leq \sigma_r$, set $\Sigma_r =diag[\sigma_1, \cdots, \sigma_r]$ and compute $\cZ_1 = \ctV_{2m} \ast (P_r(\Sigma_r)^{1/2} \circledast \cI_K)$ and $\cZ_2=((\Sigma_r)^{1/2}Q_r^T \circledast \cI_K) \ast \ctV_{2m}^T$,
		\end{enumerate} 
		end if.
		\item End For
	\end{enumerate}
	%	\label{exttenglofinal}
\end{algorithm}
\subsection{Tensor extended block Arnoldi  process for Lyapunov equations}
In this section, we define a tensor extended block Krylov subspace  via Einstein product that will be used to solve the tensor discrete Lyapunov equation (\ref{lyaptenequa}). A description of the classic block Krylov subspace methods via Einstein product has been performed in \cite{AlaaGuide}. For $\cA \in \R^{\JJ}$ and $\cB \in \R^{\JK}$, the $2m$-th block extended tensor Krylov subspace associated to the pair $(\cA, \cB)$ is defined by
\begin{equation}
	\label{blocktenext}
	\cK_m^{b-ext}(\cA,\cB) = \text{Range}\left\{\cA^{-\star m}\ast \cB,\ldots,\cA^{-1}\ast\cB, \cB, \ldots,\cA^{\star \,m-1}\ast \cB\right\},
\end{equation}
where $\cA^{\star k} =  \underbrace{\cA \ast \ldots \ast \cA}_{k}$, 
$\cA^{\star -k} =  \underbrace{\cA^{-1} \ast \ldots \ast \cA^{-1}}_{k}$ and $\cA^0=\cI$ is the identity tensor. Next,  we define the notion of upper/lower triangular tensors which will be used later on.
\medskip
\begin{df}
	\label{upperlowerten}
	Let $\cU$ and $\cL$ be two tensors in the space $\mathbb{R}^{\JI}$
	\begin{itemize}
		\item[-] $\cU$ si   an upper triangular tensor if the entries $u_{j_1,\ldots,j_N,i_1,\ldots,i_N} = 0$ when $i\text{vec}(j,J) \geq i\text{vec}(j,J)$.
		\item[-] $\cL$ si  a lower triangular tensor if the entries $r_{j_1,\ldots,j_N,i_1,\ldots,i_N} = 0$ when $i\text{vec}(j,J) \leq i\text{vec}(j,J)$,
	\end{itemize}
	where $i\text{vec}(\cdot,\cdot)$ is the mapping index mentioned in Definition \ref{transphi}.
\end{df} 
Analogously to the {\it QR decomposition} in the matrix case \cite{golub}, a similar definition to such decomposition of a  tensor $\cP$ in the space $\R^{\JK}$  is defined as follows 
%\begin{theorem1}
%	Given $\cP \in \R^{\JK}$, there exists two tensors, $\cQ \in \R^{\JK}$ and $\cR \in \R^{\KKK}$, the first one is orthogonal {\it i.e.,} $\cQ^T \ast \cQ=  \cI_K$ and the other one is an upper triangular tensor, then the factorization goes as
\begin{equation}
	\cP = \cQ \ast \cR,
\end{equation}
where $\cQ \in \R^{\JK}$  is orthogonal {\it i.e.,} $\cQ^T \ast \cQ=  \cI_K$ and  $\cR \in \R^{\KKK}$  is an upper triangular tensor (Definition \ref{upperlowerten}).
Similarly to the matrix extended block Krylov Arnoldi algorithm \cite{simon}, an analogous algorithm in a  tensor format, particularly for the $4th$-order tensor which is the main subject of this paper as mentioned previously, could be generated as follows
\begin{algorithm}[H]
	\caption{Tensor extended block Arnoldi algorithm}
	\begin{enumerate}
		\item Input :  $\cA \in \R^{\JJF}$, $\cB \in \R^{\JKF}$ and a fixed integer  $m$.
		\item Output : $\ctV_{2(m+1)} \in \R^{J_1 \times J_2 \times K_1 \times 2(m+1)K_2}$ and $\overline{H}_m \in \R^{K_1 \times 2(m+1)K_2 \times K_1 \times 2mK_2}$.
		\item Compute $QR$-decomp. to $|\cB, \, \cA^{-1}\ast \cB|_2 \in \R^{J_1\times J_2\times K_1\times 2K_2}$ {\it i.e.,} $|\cB, \, \cA^{-1}\ast \cB|_2= \cV_1 \ast \cR_0$. 
		({\it MATLAB notation \,\,}$\ctV_{2(m+1)}(:,:,:,1:2K_2)=\cV_1$)
		\item for $j=1 \cdots m$
		\item Set $\cW= |\cA \ast\cV_j^1, \cA^{-1}\ast \cV_j^2|_2$, where $\left\{\begin{array}{lll}
			\cV_j^1&=&\cV_j(:,:,:,(j-1)K_2+1:jK_2), \\
			\cV_j^2&=&\cV_j(:,:,:,jK_2+1:(j+1)K_2),
		\end{array}\right.$
		\item \quad for $i=1,\ldots, j$.
		\item  \qquad $\cH_{i,j} = \cV_i \ast \cW$.
		\item  \qquad $\cW = \cW-\cV_i \ast \cH_{i,j}$.
		\item \quad end for.
		\item Compute $QR$-decomp. to $\cW$, {\it i.e.,} $\cW= [\cV_{j+1}, \cH_{j+1,j}].$
		%	\item [] ({\it MATLAB notif.\,\,}$\ctV_{2(m+1)}(:,:,:,(j+1)K_2+1:(j+3)K_2)=\cV_{j+1}$).
		\item end for.
	\end{enumerate}
	\label{ETBAA}
\end{algorithm}
\noindent We start our process by applying Algorithm \ref{ETBAA} defined above to the pair $(\cA,\cB)$. After  $m$ iterations,   the following decomposition could be obtained 
\begin{equation}
	\label{decomblock_ext}
	\begin{array}{lll}
		\cA \ast \ctV_{2m} &= \ctV_{2(m+1)} \ast \overline{\cT}_m\\
		&= \ctV_{2m} \ast \cT_m +\cV_{2(m+1)} \ast (\cT_{m+1,m}\ast \cE_{2m}^T),
	\end{array}
\end{equation}
where $\ctV_{2m}$ is the tensor basis associated with $\cK_m^{b-ext}(\cA,\cB)$. Tensors $\overline{\cT}_m$ and $\cT_m$ are defined as follows    $$\overline{\cT}_m = \ctV_{2(m+1)}^T \ast (\cA \ast \ctV_{2m})\in \R^{K_1\times 2(m+1)K_2\times K_1\times 2mK_2}, \quad \cT_m = \ctV_{2m}^T \ast (\cA \ast \ctV_{2m})\in \R^{\KTKm}. $$ Finally the tensor $\cE_{2m}$ is get it from the identity tensor $\cI_{2m} \in \R^{\KTKm}$ as $\cE_{2m}=\cI_{2m}(:,:,:,2(m-1)K_2+1:2mK_2)$.  Analogously to the matrix case, the tensor $\cT_m$ can be defined as the restriction of the tensor $\cA$ in the extended block tensor Krylov subspace  $\cK_m^{b-ext}(\cA,\cB)$. Notice that the tensor $\overline{\cT}_m$ can be expressed as  
\begin{equation}
	\overline{\cT}_m=\begin{vmatrix}
		\cT_{1,1}	& \cT_{1,2} & \ldots  & \cT_{1,m} \\
		\cT_{2,1}	& \cT_{2,2} &\ldots  &\cT_{2,m}  \\
		\vdots	&  \ddots&\ddots  &\vdots  \\
		\cO	& \ldots &\cT_{m,m-1}  &\cT_{m,m} \\
		\cO	& \ldots & \cO &\cT_{m+1,m} 
	\end{vmatrix} \in \R^{K_1\times 2(m+1)K_2\times K_1\times 2mK_2},
\end{equation}
where the notation $|\cdot|$ is the definition of block tensors given in Section \ref{blockten}. The tensor  $\cT_m$ is obtained  from $\overline{\cT}_m$ by deleting the last row block tensor $\big\vert \cO \quad \cO \quad \ldots \quad \cH_{m+1,m}\big\vert$.
\begin{remark}
	Generally, the tensor $\overline{\cH}_m \in \R^{K_1\times 2(m+1)K_2\times K_1\times 2mK_2}$ is of small dimension, then the computation of the matricization of such tensor denoted by $mat(\overline{\cH}_m)=\overline{H}_m$ is feasible. We denote by  $\overline{T}_m$ the matrix that we get from the matricization of tensor $\overline{\cT}_m$. We recall that $\overline{T}_m$ could be computed efficiently and directly using $\overline{H}_m$ as mentioned in \cite{simoncini} without requiring any extra product. The purpose behind this process is to build $\cT_m$ needed afterwards without requiring any extra tensor product with the large scale tensor $\cA$.
\end{remark} 
We seek for an approximate solution $\cX_m$ to $\cX$ fulfilling the discrete Lyapunov equation (\ref{lyaptenequa2}). Consider the following form of the approximation
\begin{equation}
	\label{approblockext}
	\cX_m = \ctV_{2m}\ast \cY_{2m}\ast\ctV_{2m}^T,
\end{equation}
where $\ctV_{2m}$ is the tensor basis associated with the tensor Krylov subspace obtained by using Algorithm \ref{ETBAA} to the pair $(\cA,\cB)$. We obtain the tensor $\cY_{2m}\in \R^{\KTKm}$ via the following condition
\begin{equation}
	\label{approcondblext}
	\ctV_{2m}^T \ast(\cR_m \ast \ctV_{2m}) = 0,
\end{equation}
and $\cR_m$ is the residual tensor given by $\cR_m = \cX_m - \cA \ast \cX_m \ast \cA^T - \cB \ast \cB^T$. We develop the condition (\ref{approcondblext}) using the decomposition (\ref{decomblock_ext}) together with orthogonality condition ({\it i.e.,} $\ctV_{2m}^T \ast \ctV_{2m} = \cI_{2m}$), we find that the tensor $\cY_{2m}$ fulfil the following low dimensional Lyapunov tensor equation
\begin{equation}
	\label{lowdimsyltenbloext}
	\cY_m - \cT_m\ast \cY_m\ast \cT_m^T = \cB_m\ast \cB_m^T.
\end{equation}
The right hand side could be simplified to the following $\cB_m\ast \cB_m^T$. We know from Algorithm \ref{ETBAA} that 
\begin{equation*}
	\begin{split}
		|\cB, \, \cA^{-1}\ast \cB|_2&= \cV_1 \ast \cR_0 \\
		&= |\cV_1^1,\cV_1^2|_2 \ast \begin{vmatrix}
			\cR_0^{(1,1)} & \cR_0^{(1,2)} \\
			\cO & \cR_0^{(2,2)}
		\end{vmatrix}, 
	\end{split}
\end{equation*}
where $\cV_1^i = \cV_1(:,:,:,(i-1)K_2+1:iK_2) \in \R^{\KK}$, $\cR_0^{(i,j)} = \cR_0(:,(i-1)K_2+1:iK_2,:,(j-1)K_2+1:jK_2)\in \R^{\KK}$. The notation $|\cdot|_2$ is the definition of row block tensor given in Definition \ref{rowblock} and the upper triangular tensor $\cR_0$ defined in Definition \ref{upperlowerten} can be presented as described above using notation from Section \ref{blockten}. Using Proposition \ref{rowblocktimesrowblock}, we can conclude the following 
$$\cB = \cV_1^1 \ast \cR_0^{(1,1)},$$
thus
\begin{equation}
	\label{rhsext}
	\begin{split}
		\cB_m &= \ctV_{2m}^T \ast \cB\\
		&= \cE_{2m}^1 \ast \cR_0^{(1,1)},
	\end{split}
\end{equation}
where $\cE_{2m}^1$ can be extracted from the identity tensor $\cI_{2m} \in \R^{\KTKm}$ as follows
$\cE_{2m}^1= \cI_{2m}(:,:,:,1:K_2).$ In the next result, we show how to simplify the expression of the residual $\cR_m$, this yields to a proper computation of the residual tensorial norm $\|\cR_m\|$. Our process is stopped whenever the residual norm $\|\cR_m\|$ is lower than a chosen tolerance $\epsilon$ ({\it i.e.$\|\cR_m\| < \epsilon$ }).
\medskip
\begin{theorem1}
	\label{extbresinorm}
	Assume that $m$ iterations have been run using Algorithm \ref{ETBAA}, then 
	\begin{equation}
		\|\cR_m\| =r_m,		
	\end{equation}
	where \begin{equation*}
		r_m = \sqrt{2*\|\cT_m\ast \cY_{2m} \ast \cE_{2m} \ast \cT_{m+1,m}^T \|^2 +\|\cT_{m+1,m}\ast \cE_{2m}^T \ast \cY_{2m} \ast \cE_{2m}\ast \cT_{m+1,m}^T\|^2},
	\end{equation*}
	and $\|\cdot\|$ is the Frobenius tensor norm defined in Definition \ref{tennorm}.
\end{theorem1}
\begin{proof}
	We plug $\cX_m$ in the expression of $\cR_m$ and by using decomposition (\ref{decomblock_ext}) and the fact that $\cY_{2m}-\cT_m\ast \cY_{2m}\ast \cT_m^T -\cE_{2m}^1 \ast \cR_0^{(1,1)} \ast (\cE_{2m}^1 \ast \cR_0^{(1,1)})^T =0$ ({\it i.e.,}\, (\ref{lowdimsyltenbloext})), we get 
	{%\footnotesize
		\begin{equation*}
			\cR_m= \ctV_{2(m+1)} \ast\left[\begin{vmatrix}
				0	& -\cT_m \ast \cY_{2m} \ast \cE_{2m} \ast \cT_{m+1,m}^T  \\
				-(\cT_m \ast \cY_m \ast \cE_{2m} \ast \cT_{m+1,m}^T)^T	& \cT_{m+1,m} \ast \cE_{2m}^T \ast\cY_{2m} \ast\cE_{2m} \ast \cT_{m+1,m}^T 
			\end{vmatrix} \right] \ast \ctV_{2(m+1)}^T.
	\end{equation*}} 
	Then we obtain the  desired result 
	$$\|\cR_m\| = \sqrt{2*\|\cT_m\ast \cY_{2m} \ast \cE_{2m} \ast \cT_{m+1,m}^T \|^2 +\|\cT_{m+1,m}\ast \cE_{2m}^T \ast \cY_{2m} \ast \cE_{2m}\ast \cT_{m+1,m}^T\|^2}.$$
\end{proof}

Thanks to all the results described above, we are now able to present the following algorithm
\begin{algorithm}[H]
	\caption{ The Lyapunov  tensor extended block  Arnoldi algorithm (TLTEBAA)}
	\label{TLBETAA}
	\begin{enumerate}		
		\item  Inputs: ~$\cA\in \R^{\JJF}$,  $\cB\in \R^{\JKF}$, tolerances $\epsilon$, {\tt dtol} and a number $m_{max}$ of maximum iterations.
		\item  Outputs: the approximate solution $\cX_m \approx \cZ_1 \ast \cZ_2^T$.
		\item For $m=1, \cdots, m_{max}$ 
		\item Use Algorithm \ref{ETBAA} to built $\ctV_{2m}$ an orthonormal tensor associated to the tensor Krylov subspace $\cK_m^{b-ext}(\cA,\cB)$ and $\cH_m \in \R^{\KTKm}$ a type Hessenberg form tensor.
		\item Solving the low-dimensional Lyapunov tensor equation (\ref{lowdimsyltenbloext}) after matricization using the MATLAB function {\tt dlyap}.
		\item Compute the residual norm $\|\cR_m^e\|$ using Theorem \ref{extbresinorm}, and if it is less than $\epsilon$, then
		\begin{enumerate}
			\item Compute the SVD of $\cY_m= \cP \ast \Sigma \ast \cQ^T$ where $\Sigma$ is a diagonal tensor contains singular values $ \left\{\sigma_1, \cdots, \sigma_{2m}\right\}.$ This definition of tensor SVD via Einstein product could be found in \cite{Brazell}.
			\item Determine $r$ such that $\sigma_{r+1} < {\tt dtol} \leq \sigma_r$, set $\Sigma_r$ as a diagonal tensor with only the first $r$ singular values, then we compute $\cZ_1 = \ctV_{2m} \ast (\cP_r\ast (\Sigma_r)^{1/2} )$ and $\cZ_2=((\Sigma_r)^{1/2}\ast \cQ_r^T) \ast \ctV_{2m}^T$. $\cP_r$ and $\cQ_r$ are defined as $\cP_r=\cP(:,:,:,1:2rK_2)$ and $\cQ_r=\cQ(:,:,:,1:2rK_2).$
		\end{enumerate} 
		end if.
		\item End For
	\end{enumerate}
	%\label{ebasaalgo}
\end{algorithm}
\begin{remark}
	Dealing with the continuous time MLTI systems (\ref{orymlc}) lead to the appearance of continuous time Lyapunov equations of the form $\cA \ast \cX - \cX \ast \cA^T = \cB \ast \cB^T$. The methods described above for finding an efficient approximation to the discrete-time Lyapunov equations (\ref{lyaptenequa}) can also be applied to the continuous-time Lyapunov equations.
\end{remark}
\section{Numerical experiments}
\label{numerexamp}
In this section, numerical examples are performed to validate the efficiency of the algorithms described above based on the tensor extended global and block Krylov subspaces. The numerical results were obtained using MATLAB R2018a on a computer with Intel $^\text{\textregistered}$ core i7 at 2.6GHz and 16Gb of RAM. We need to mention that all the algorithms described here, have been implemented based on the MATLAB tensor toolbox  established by Kolda et al., \cite{Kolda1}. \\
We start our numerical tests  by showing the results obtained by  the tensor extended global algorithm to get a reduced MLTI system from the original one. The tensorial structure of the constructed reduced MLTI system is described in (\ref{tenstruredu}). We are interested in the input-output behaviour of the reduced MLTI system. Such behaviour could be measured by the closeness of the original and reduced transfer functions, denoted by $\cF(s)$ and $\cF_m(s)$, respectively. Analogously to the matrix case (\cite{discsys}), we can define the following norm named $h_{\infty}$-norm (cf. \cite{antoulas}) to describe the magnitudes needed for plotting the two transfer functions and see how close the reduced MLTI system to the original one\\
\begin{equation}
	\label{hintennorm}
	\|\cF(.)\|_{\infty} = \|\Psi(\cF(.))\|_{\infty} =\sup_{\theta\in [0,2\pi]} \sigma_{max}(\Psi(\cF(e^{i\theta}))).
\end{equation}
{\bf Example 1.} We consider the following data 
\begin{itemize}
	\item [-] $\cA \in \R^{N\times N \times N \times N}$,  with  $N=100$. Here, $\cA$ is constructed from a tensorization of a triangular matrix $A \in \R^{N \,N \times N \,N}$ constructed using {\tt spdiags} MATLAB function.\\
	\item [-] $\cB, \cC^T \in \R^{N\times N \times K_1\times K_2}$ are chosen as a  sparse and random tensors respectively with $K_1=3$ and $K_2=5$.
\end{itemize}
The choice of $m=5$ which is the dimension of the tensor extended global Krylov subspace where we project the original MLTI system was sufficient to obtain the results described in Figure \ref{fig1}.
\begin{figure}[H]
	%	\centering
	\includegraphics[width=7.7cm]{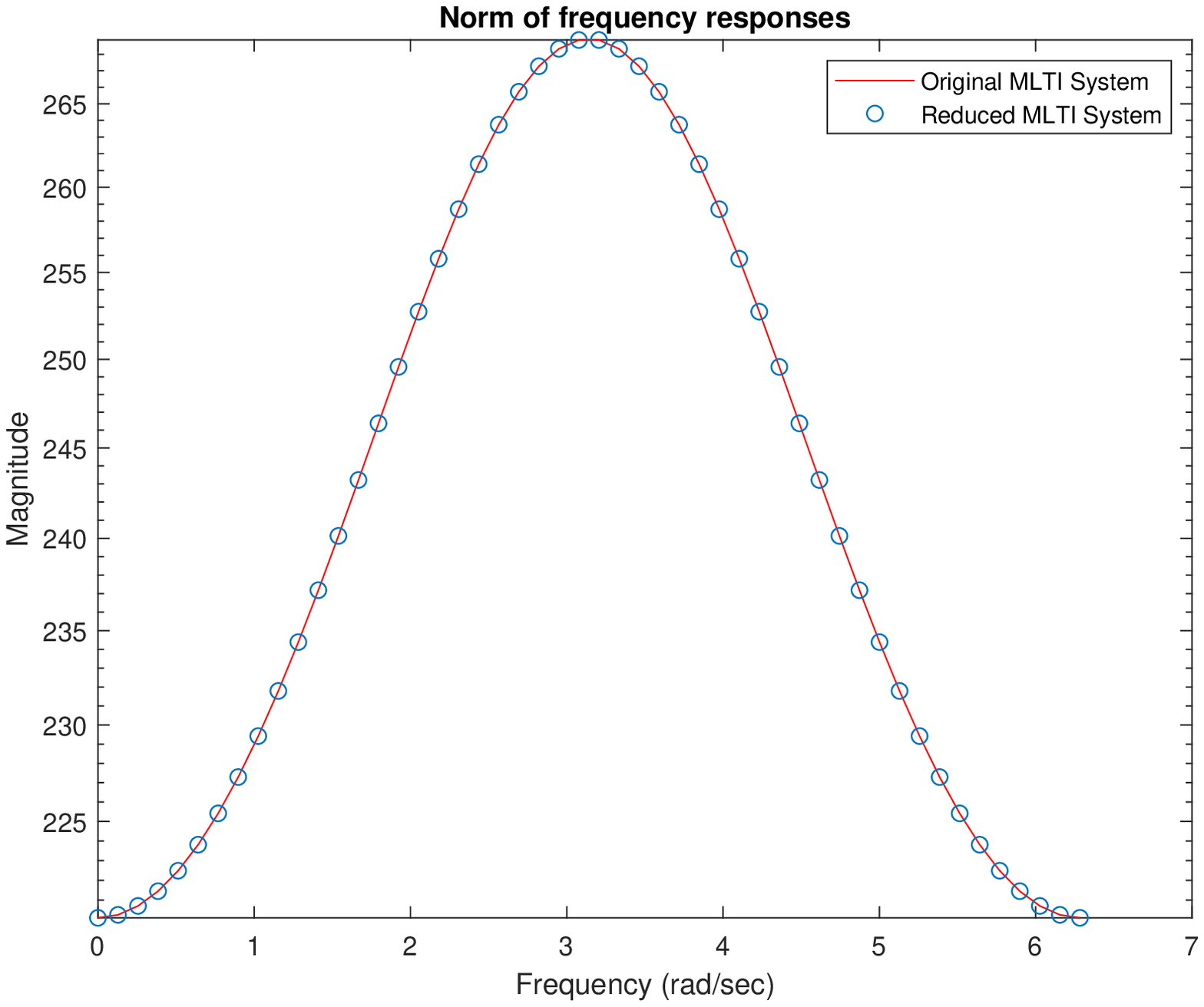}
	\hfill
	\includegraphics[width=7.7cm]{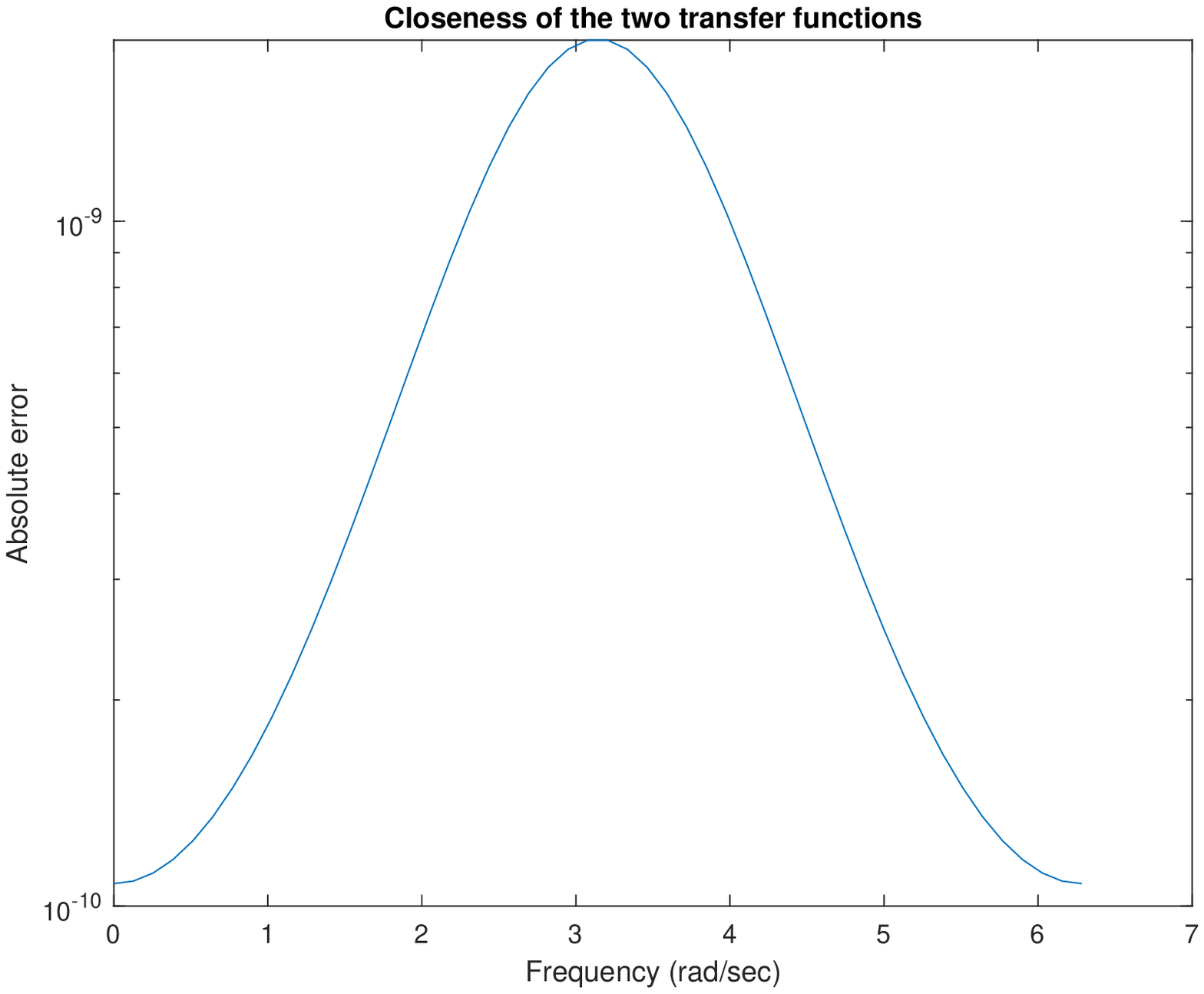}
	\caption{The frequency responses of the original and reduced MLTI systems ({\it left}) and the error norms $\|\cF-\cF_m\|_{\infty}$ ({\it right}).}\label{fig1}
\end{figure}
\noindent {\bf Example 2.} This example is from (\cite{CAN3}) and  it concerns the evolution of heat distribution in a solid medium over time. The partial differential equation that describes this evolution is given by the following equation and named the $2D$ heat equation  
\begin{equation}
	\left\{\begin{split} 
		\dfrac{\partial}{\partial t}\phi(t,x) &= c^2 \dfrac{\partial^2}{\partial x^2} \phi(t,x) +\delta(x) u_t, \quad x\in D \, (\text{square }D=[-\pi^2,\pi]),\\ 
		\phi(t,x) &= 0, \quad x\in \partial D,
	\end{split} \right. 
\end{equation}
where $c>0$, $u_t$ is a one dimensional control input and $\delta(x)$ is the Dirac delta function centred at zeros. According to \cite{CAN3}, a second-order central difference is used to approximate the Laplacien and a first-order difference in time, this leading to a MLTI system of the form (\ref{oryml}),where $\cX_k\in \R^{N\times N}$ called the 2D-temperature field at instance $k$.
For further information, we refer the readers to \cite{CAN3}, the same problem has been treated in a different way in \cite{Brazell}. We consider the following system tensors
\begin{itemize}
	\item [-] The tensor $\cA \in \R^{N \times N \times N \times N}$ with $N=128$ is the the tensorization of $\frac{c^2 \Delta t}{h^2} \Delta_{dd} \in  \R^{N^2 \times N^2}$, where $\Delta_{dd}$ is the discrete Laplacian on a rectangular grid with a Dirichlet boundary condition (see \cite{CAN3} for more details).
	\item [-] The constructed tensor $\cB$ in \cite{CAN3} is of dimension
	$N\times N \times 1\times 1$. Here, we change $\cB$ and we choose it as a  sparse tensor in $ \R^{N\times N \times K_1\times K_2}$  with $K_1=3$ and $K_2=5$.
	\item [-] To complete the system, we will need the output tensor $\cC$. We choose it as random tensor in $\R^{K_1\times K_2\times N\times N}$ with $K_1=3$ and $K_2=5$.
	\item [-] We set $m=20$, the associated dimension of the extended tensor Krylov subspace (\ref{tenextkrysub}). 
\end{itemize}
\begin{figure}[H]
	%	\centering
	\includegraphics[width=8cm]{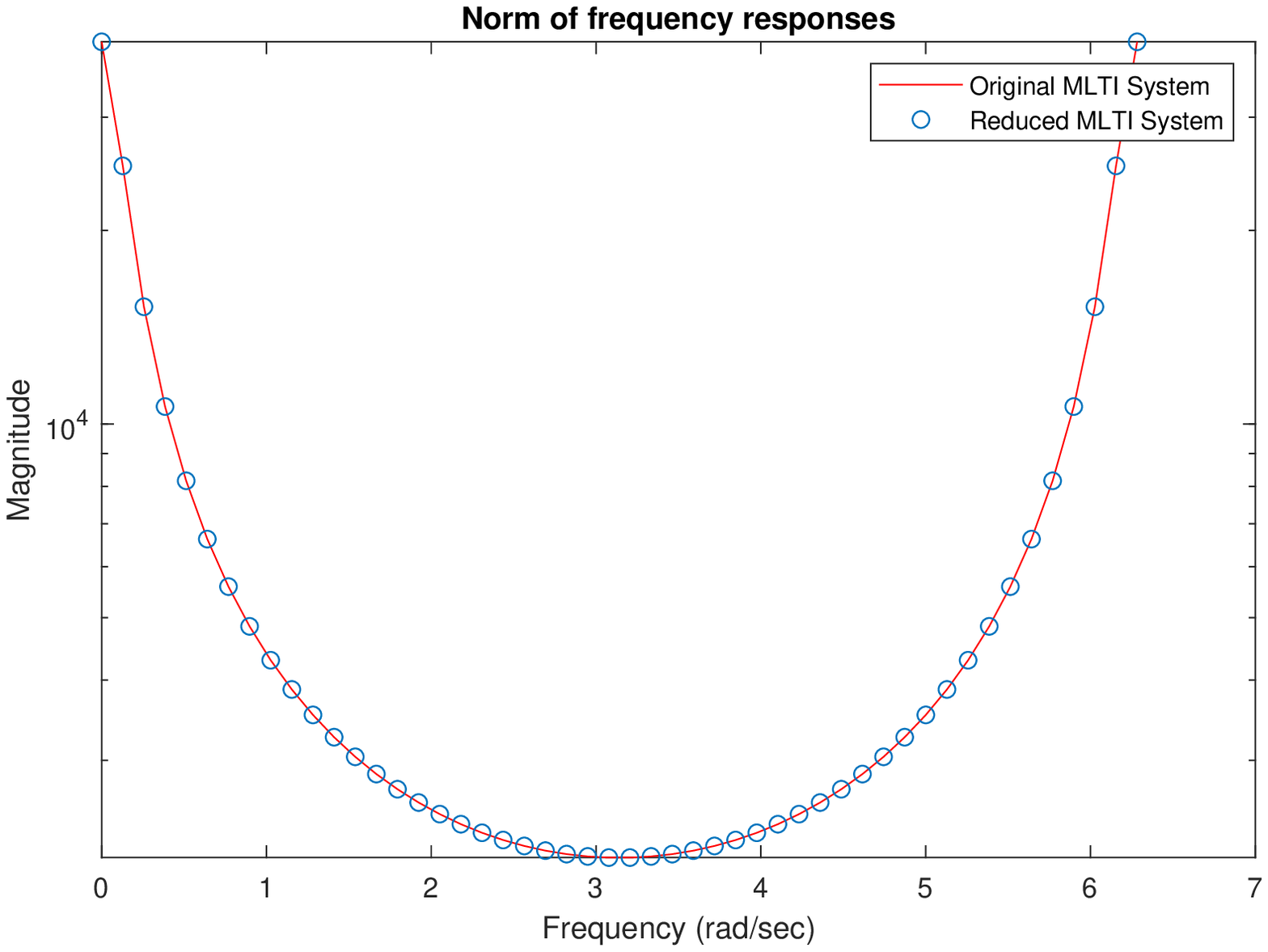}
	\hfill
	\includegraphics[width=7.2cm]{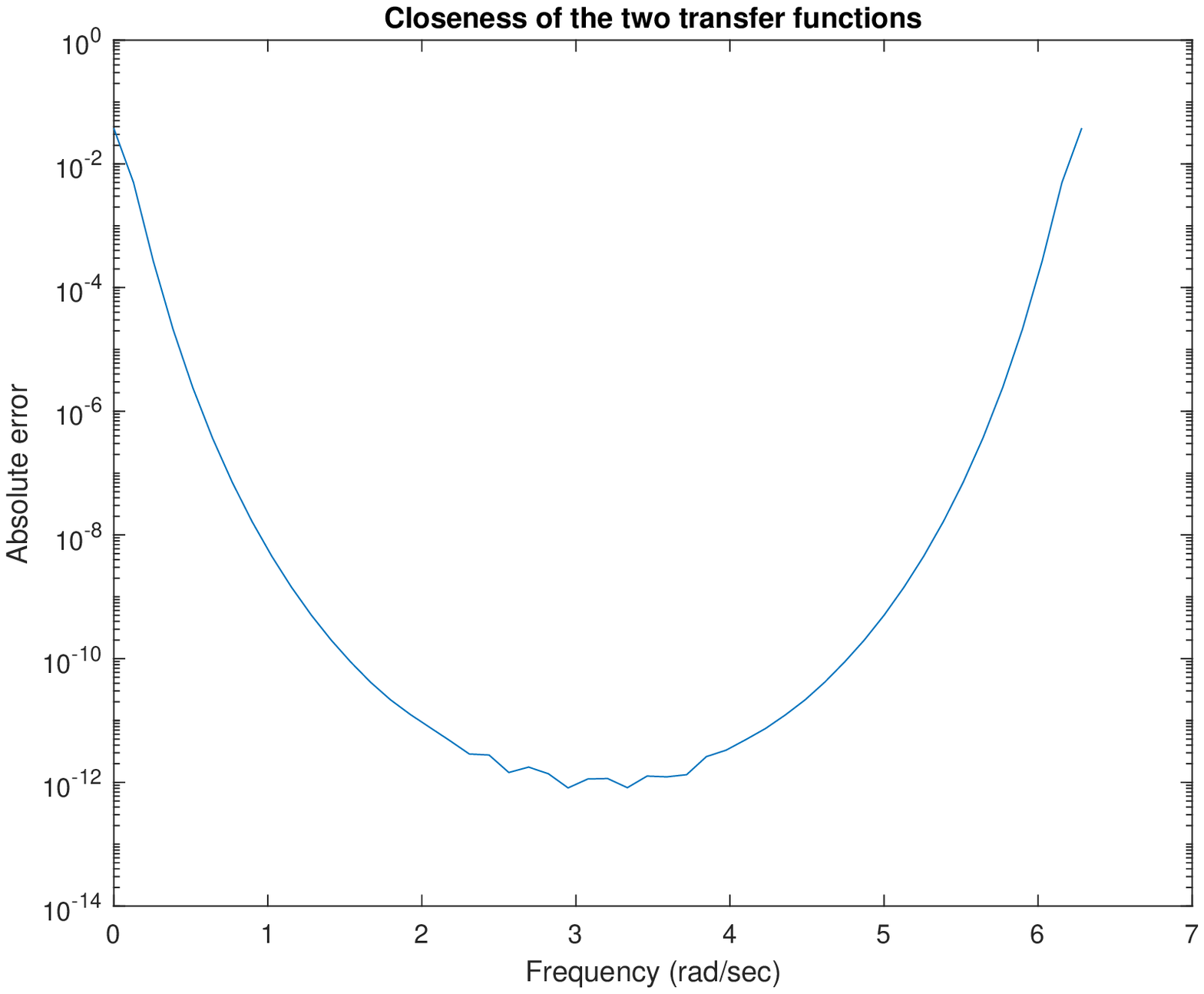}
	\caption{The frequency responses of the original and reduced MLTI systems ({\it left}) and the error norms $\|\cF-\cF_m\|_{\infty}$ ({\it right}).}
\end{figure}
{\bf Example 3.}
For this experiment, we show the results obtained from the tensor Balanced Truncation method explained in Section \ref{BTsec}, we consider the following data 
\begin{itemize}
	\item [-] $\cA \in \R^{N\times N \times N \times N}$,  with  $N=100$. Here, $\cA$ is constructed from a tensorization of a matrix $A \in \R^{N^2 \times N^2}$ constructed using the identity matrix {\tt speye} perturbed by {\tt sprandn} MATLAB function.\\
	\item [-] $\cB, \cC^T \in \R^{N\times N \times K_1\times K_2}$ are chosen as a random tensors with $K_1=5$ and $K_2=5$.
	\item[-] We use Algorithm \ref{TLETGAA} to solve the two discrete Lyapunov equations (\ref{lyapequaBT}) by setting $m=20$ (\it i.e., max number of iterations) and the tolerance $\epsilon = 1e^{-6}$.
\end{itemize}
\begin{figure}[H]
	%	\centering
	\includegraphics[width=7cm]{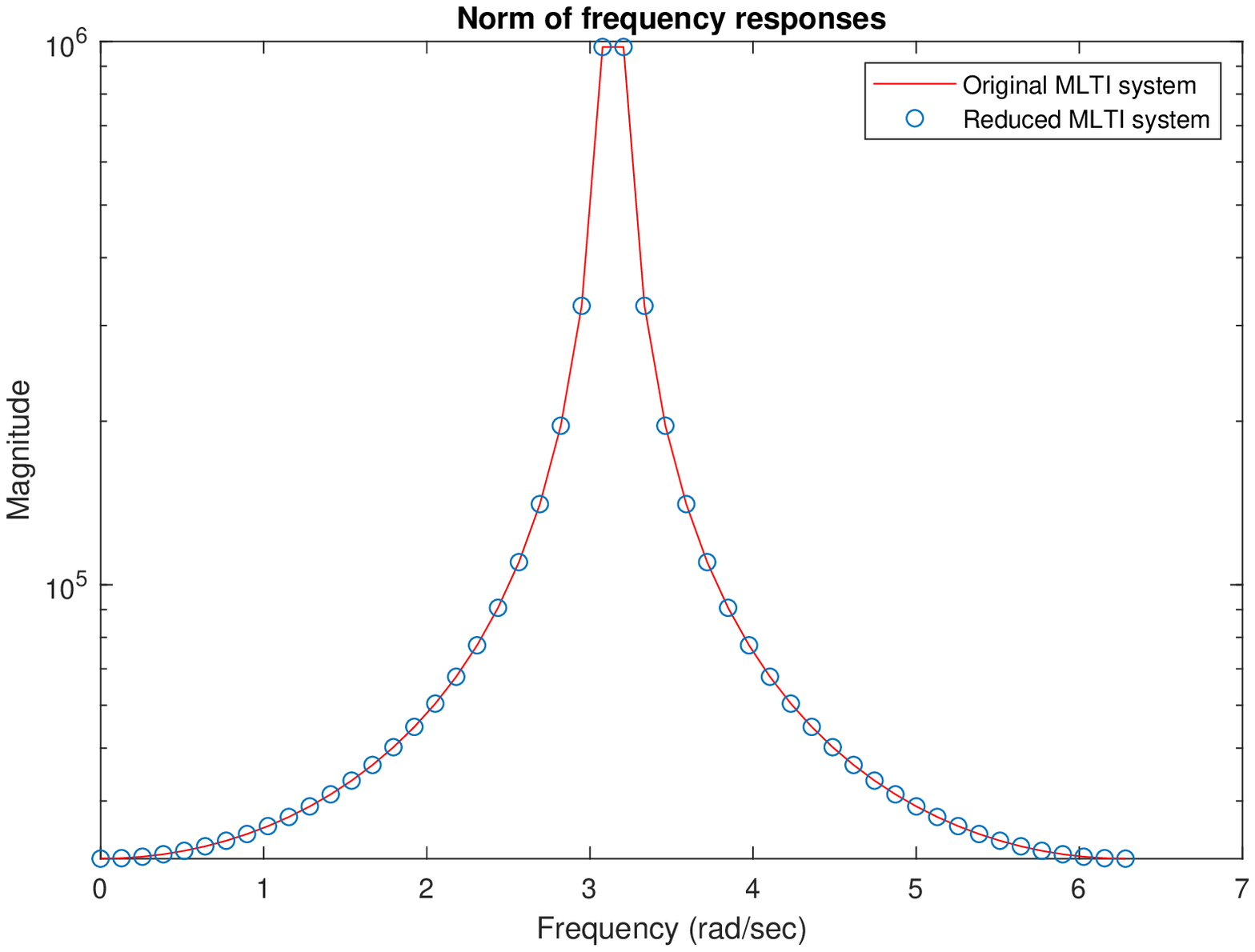}
	\hfill
	\includegraphics[width=7cm]{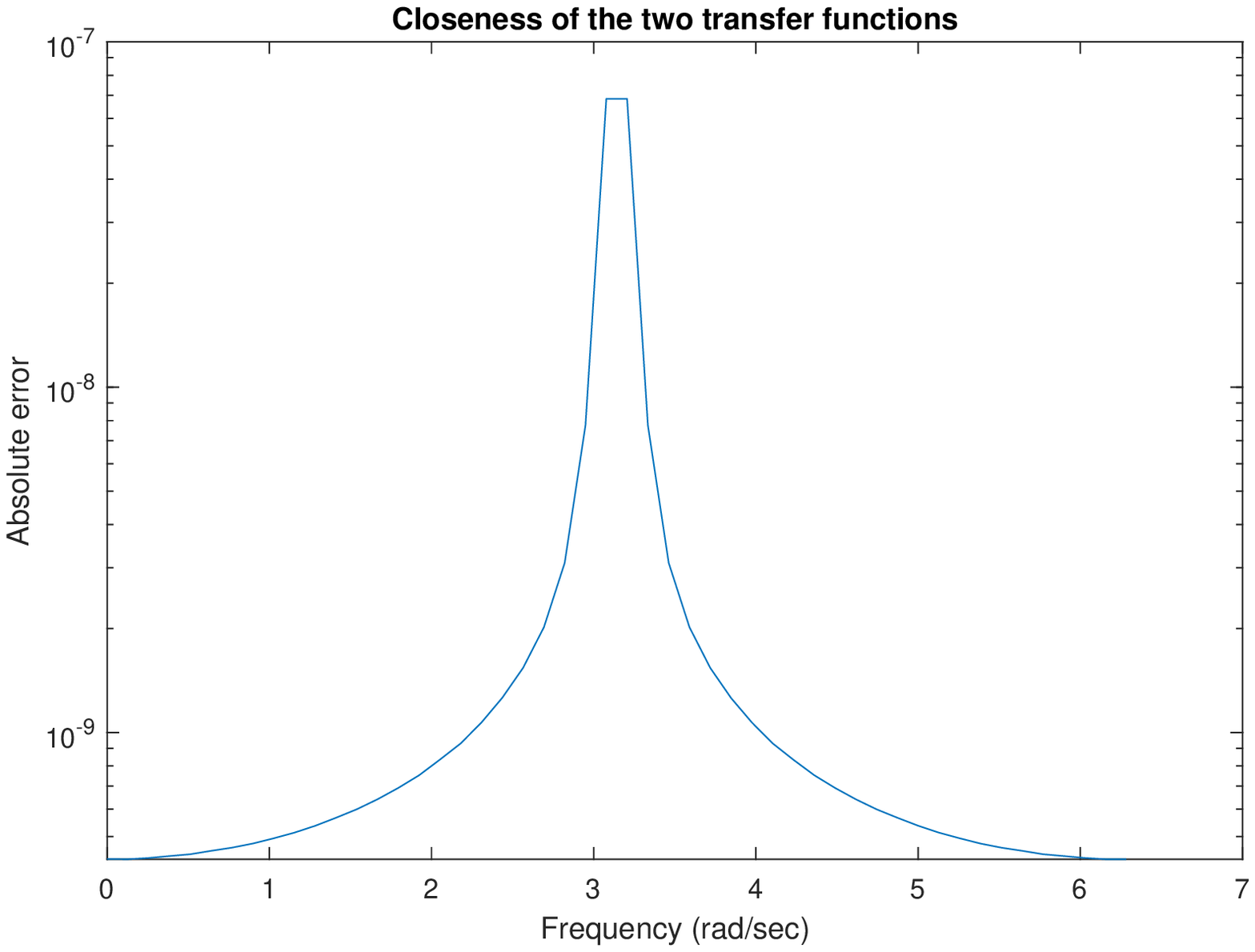}
	\caption{The frequency responses of the original and reduced MLTI systems ({\it left}) and the error norms $\|\cF-\cF_m\|_{\infty}$ ({\it right}).}
\end{figure}
For the next experiment, we show the results obtained by  the methods described  in Algorithms \ref{ETBAA} and \ref{TLBETAA}. These methods are seeking for an efficient approximate solution of  the discrete Lyapunov tensor equation $\cX -\cA\ast \cX \ast \cA^T = \cB \ast \cB^T$. We stopped our iterations whenever $\|\cR_m\|<\epsilon$, 
where $\|\cdot\|$ is the Frobenius norm associated to tensors defined in Section \ref{prelem} and  $\epsilon$ is some tolerance that will be mentioned in each example. 
We compared our methods based on the tensor extended global and block Krylov subspaces, in Algorithm \ref{TLETGAA} ({\it for the global process}) and in Algorithm \ref{TLBETAA} ({\it for the block process}) with the classic global Arnoldi described in Algorithm \ref{tengloalgo}  and block Arnoldi process described in \cite{AlaaGuide}. Consider the following data 
\begin{itemize}
	\item [-] $\cA \in \R^{\JJF}$,  with  $J_1=J_2$ in $\left\{32,64,128\right\}.$ Here, $\cA$ is a  random sparse tensor  ({\it i.e.,} MATLAB func. {\tt sptenrand}).
	\item [-] $\cB \in \R^{\JKF}$ is chosen as a sparse tensor with $K_1=3$ and $K_2=5$.
	\item [-] Th tolerance $\epsilon$ is set to $10^{-6}$.
\end{itemize}
\begin{figure}[H]
	\centering
	\includegraphics[width=7cm]{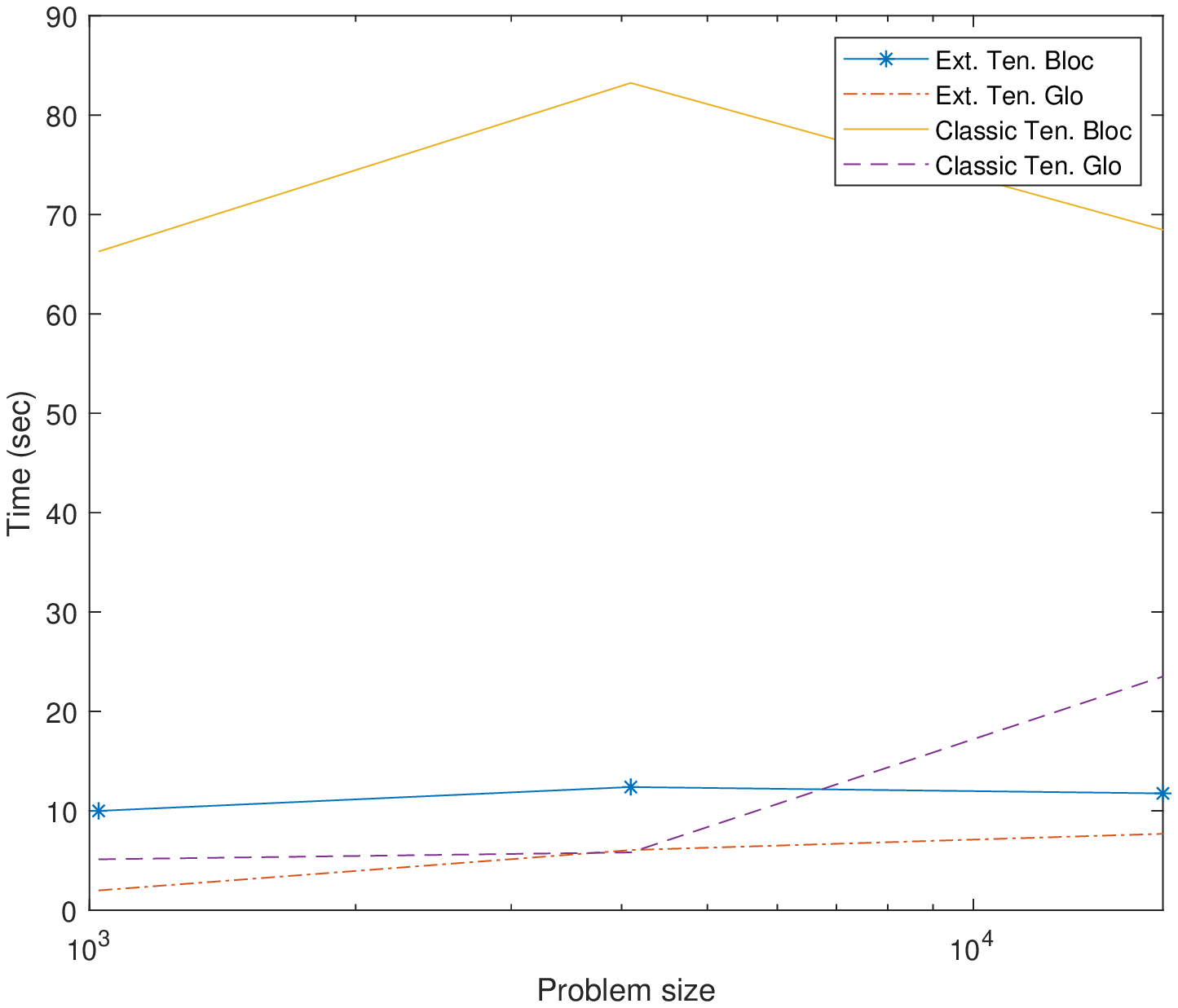}
	%\caption{Consumed time by the four methods to reach the criterion convergence.}
	\hfill
	\includegraphics[width=7cm]{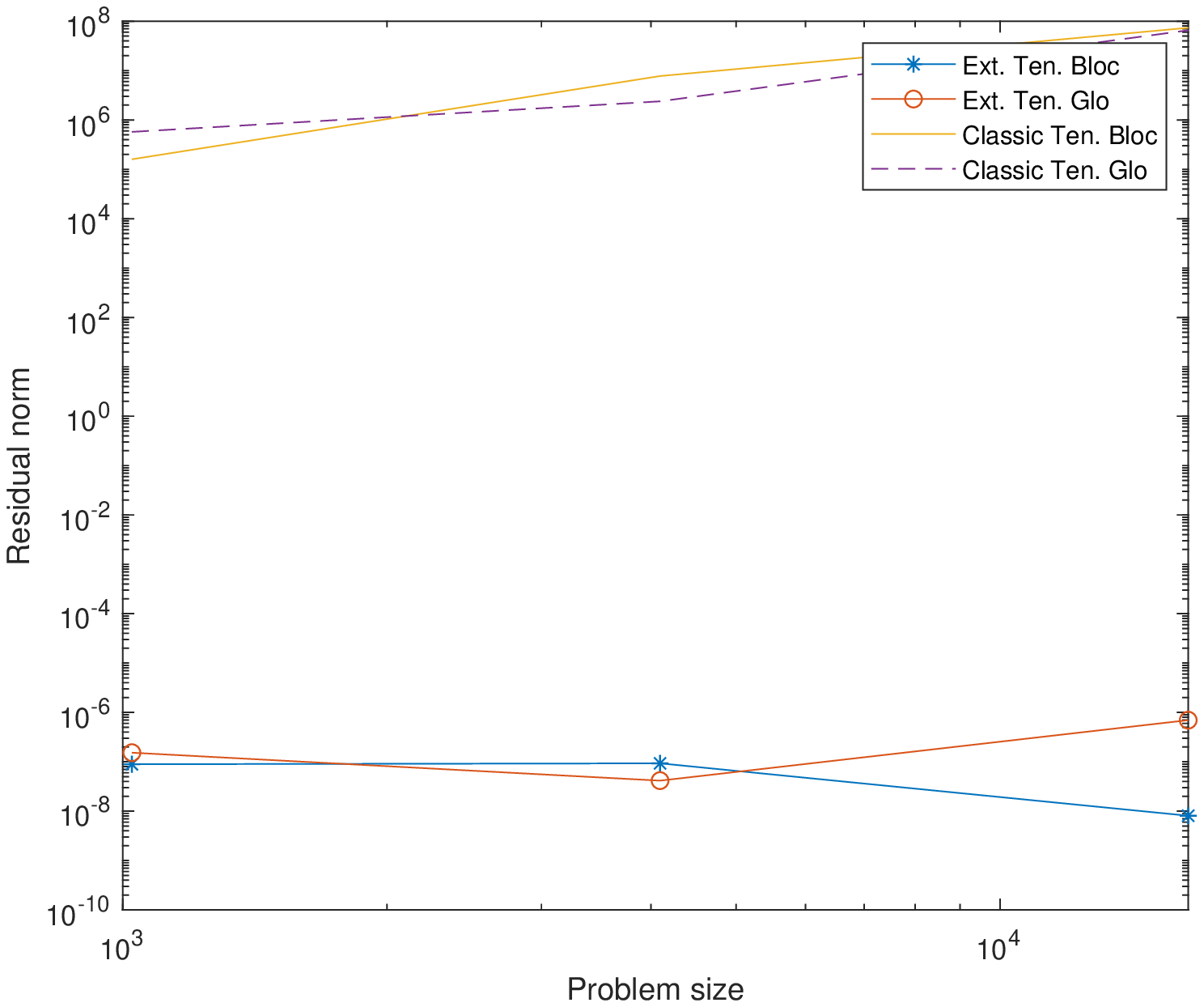}
	\caption{Consumed time by the four methods to reach the criterion convergence ({\it left}) and the residual norm ({\it right}).}
	\label{cpuerror1}
\end{figure}
In the left plot of Figure \ref{cpuerror1}, we show the cpu-time consumed by the four methods to achieve the  convergence criterion ({\it i.e.,} $\|\cR_m\|<1e^{-6}$). The dimension of the tensor extended Krylov subspace was fixed to  $m=20$, and since this latter is of $2m$ dimension, we used $m_1=2m$ the dimension of the tensor classic Krylov subspace in order to realise a fair comparison. This latter is considered for all the examples described here. The tensor extended block and global Krylov subspace methods have successfully achieved the desired result in a few seconds as mentioned in the right plot of Figure \ref{cpuerror1}, contrary to the tensor classic  block and global  Krylov subspace methods where no convergence has been guaranteed and more iterations were need.

\noindent For the  next  experiment, the tolerance was  set to  $\epsilon =10^{-8}$ and the following data has been considered 
\begin{itemize}
	\item [-] $\cA \in \R^{\JJF}$,  we set $J_1=J_2$ in $\left\{32,64,128,256\right\}.$ Here, $\cA$ is constructed from a tensorization of a triangular matrix $A \in \R^{J_1J_2 \times J_1J_2}$ constructed using {\tt spdiags} MATLAB function.
	\item [-] $\cB \in \R^{\JKF}$ is chosen as a random tensor with $K_1=3$ and $K_2=5$. In Figure, $\cB$ is a random tensor with $K_1=5$ and $K_2=6$.
\end{itemize}
\begin{figure}[H]
	\centering
	\includegraphics[width=7cm]{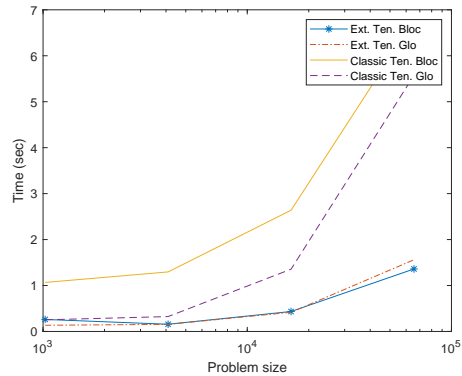}
	\caption{Consumed time by the four methods to reach the criterion convergence.}
	\label{cpu2}
\end{figure}
In Figure \ref{cpu2}, we  represent the computation time used  for the four methods to reach the convergence criterion. In Figure \ref{cpu3}, we plotted the cpu-time needed to achieve a good convergence using two examples of $\cB$; for the left plot we chose $\cB\in \R^{J_1\times J_2\times 5\times 6}$, and for the right plot, we show the results depicted from choosing  $\cB\in \R^{J_1\times J_2\times 6\times 8}$. 
\begin{figure}[H]
	%	\centering
	\includegraphics[width=7cm]{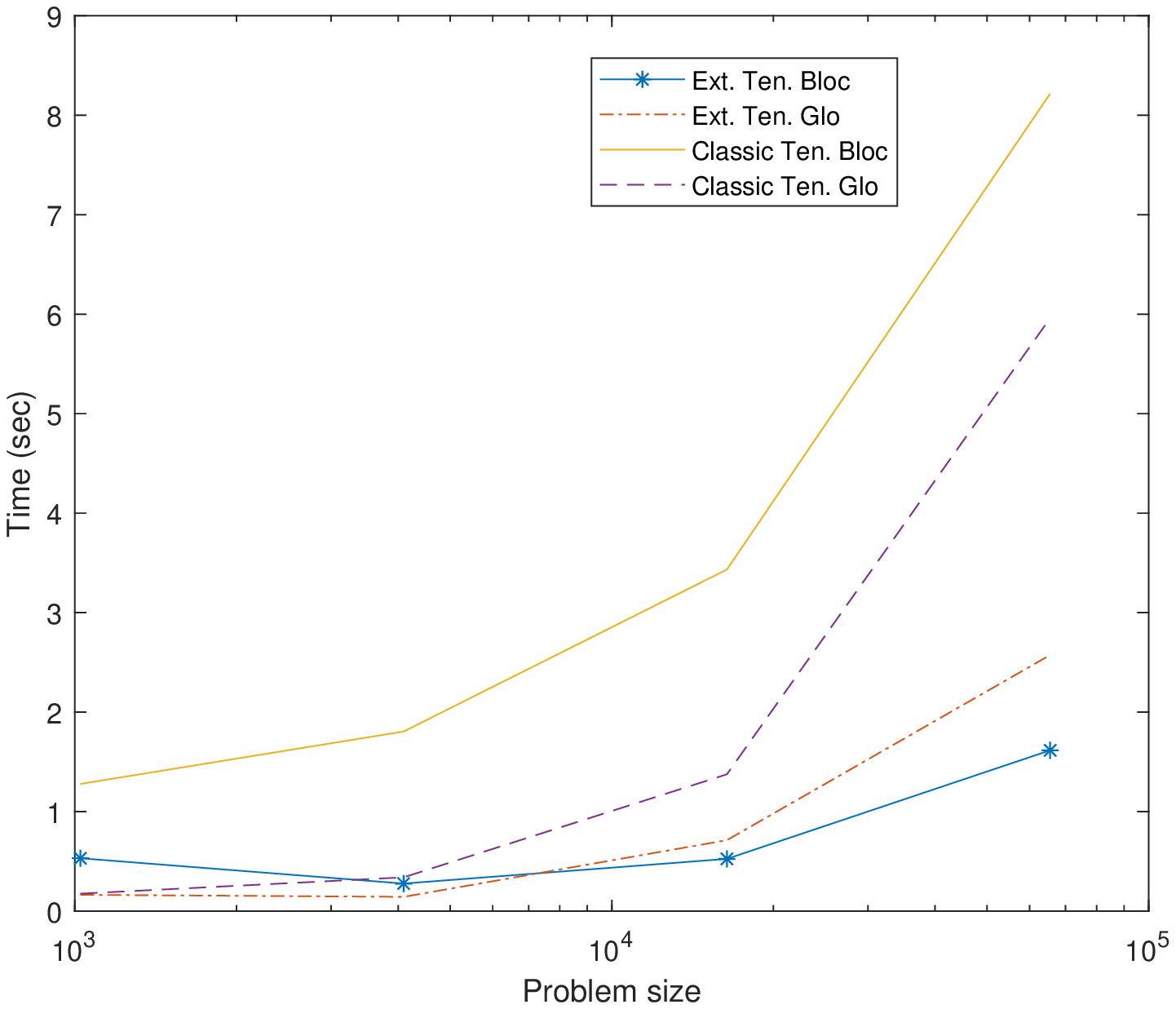}
	\hfill
	\includegraphics[width=7.3cm]{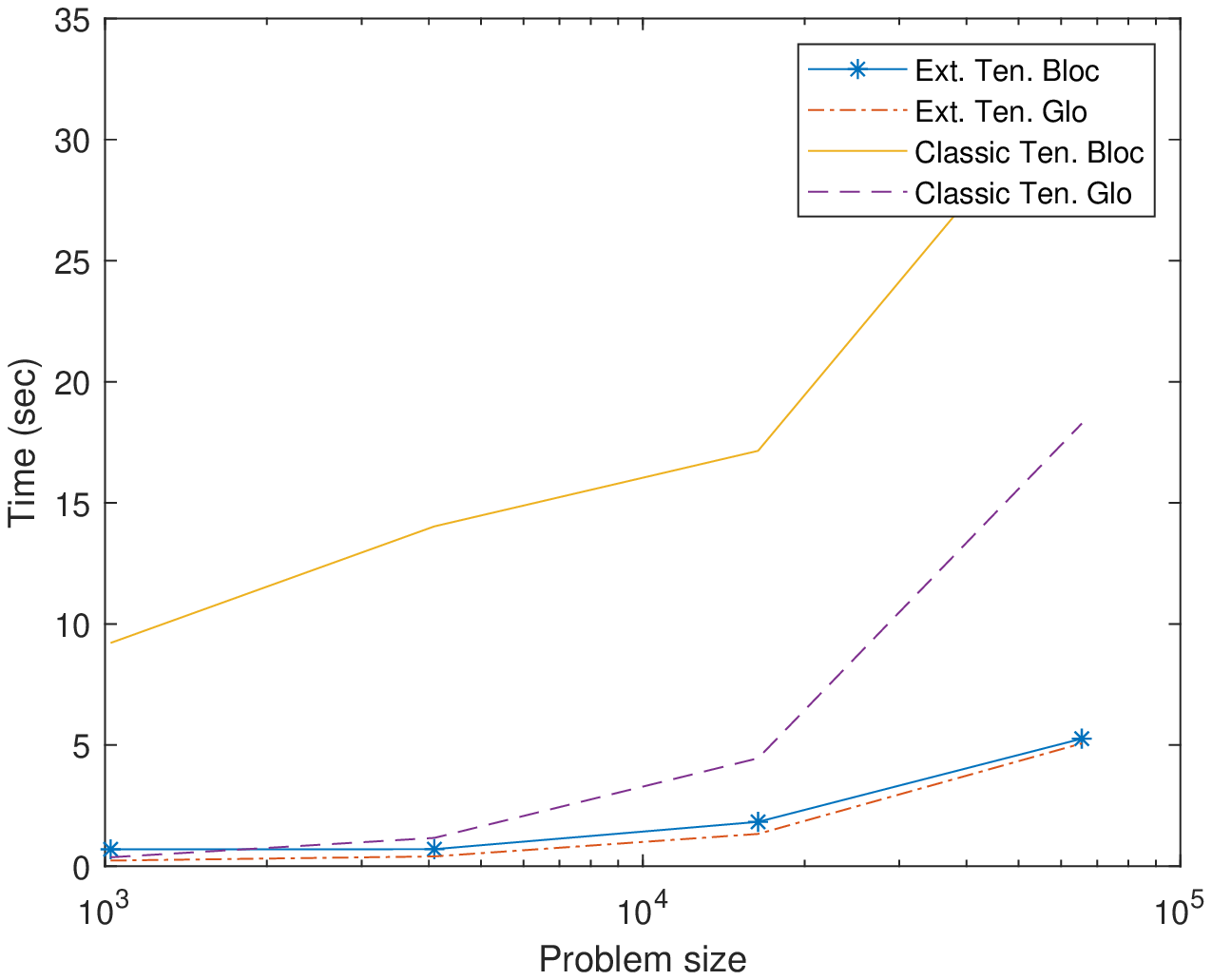}
	\caption{Consumed time by the four methods to reach the criterion convergence, where the right hand side $\cB\in \R^{J_1\times J_2\times 5\times 6}$  ({\it left}) $\cB\in \R^{J_1\times J_2\times 6\times 8}$ ({\it right}).}
	\label{cpu3}
\end{figure}
In Table \ref{table}, we present the results obtained  from  the tensor  classic and extended  Krylov subspace methods using the block and global process to solve high order discrete Lyapunov tensor equations. We stopped our iterations when  the residual norm is less than $\epsilon=10^{-8}$. We considered  the following data 
\begin{itemize}
	\item [-] $\cA \in \R^{\JJF}$,  we set $J_1=J_2$ in $\left\{512,1024\right\}.$ Here, $\cA$ is constructed from a tensorization of a triangular matrix $A \in \R^{J_1J_2 \times J_1J_2}$ constructed using {\tt spdiags} MATLAB function. After that we use {\tt spreshape} to get the tensor form.\\
	\item [-] $\cB \in \R^{\JKF}$ is chosen as a random tensor with $K_1=5$ and $K_2=6$.
\end{itemize}
\begin{table}[!h]
	\caption{Results gained from the four methods for solving high order Lyapunov tensor equations.}
	\centering %\begin{center} 
	%		\vskip0.2cm
	%		\begin{adjustbox}{tabular=lll,center}
		\resizebox{\textwidth}{!}{\begin{tabular}{l|c|c|c|c}
				\midrule
				%\\[0.1cm]
				Size of $\cA$ ($\JJF$)  & Classic Kry. Bloc & Classic Kry. Global & Extended Kry. Bloc & Extended Kry. Global    
				\\%[0.2cm] 
				\midrule
				%[0.1cm]
				& cpu-time \quad Resi-norm \quad Iter. & cpu-time \quad Resi-norm \quad Iter. &cpu-time \quad Resi-norm \quad Iter.&cpu-time \quad Resi-norm \quad Iter.	\\[0.1cm]
				\midrule %[0.05cm]
				$J_1 =J_2 = 512$  & {\bf 42.84} \quad 4.07 $\times$ 10$^{-09}$ \quad 25 & {\bf 36.25} \quad 5.41$\times$ 10$^{-09}$ \quad 26 & {\bf 9.63} \quad 2.81$\times$ 10$^{-09}$ \quad 7 & {\bf 14.37} \quad 1.83$\times$ 10$^{-10}$ \quad 8 \\[0.1cm]
				\midrule
				$J_1 =J_2 = 1024$  & {\bf 172.02} \quad 9.42 $\times$ 10$^{-09}$ \quad 26 & {\bf 149.88} \quad 6.78$\times$ 10$^{-09}$ \quad 27 & {\bf 48.91} \quad 1.75$\times$ 10$^{-10}$ \quad 8 & {\bf 57.14} \quad 7.10$\times$ 10$^{-10}$ \quad 8\\[0.1cm]
				\midrule
		\end{tabular}}\label{table}
	\end{table}
	
	\medskip 
	\section*{Conclusion}
	In this paper, we explored the application of multilinear  algebra in reducing the order of multidimentional linear  time invariant (MLTI) systems. We used tensor Krylov subspace methods as  key tools, which involve approximating the system solution within a low-dimensional subspace. We introduced  the tensor  extended block and global Krylov subspaces  and the corresponding Arnoldi based processes. Using these methods, we developed a model reduction   using  projection techniques. We have also shown how these methods could be used to solve large-scale Lyapunov tensor equations that are required in the balanced truncation method which is a technique for order reduction. We demonstrated how to extract approximate solutions  via the Einstein product using the tensor extended block Arnoldi and the extended global Arnoldi processes.
%\section*{Conclusion}
%A tensor Krylov-based method has been explored in this paper. We have started by describing the tensor classical Krylov subspace method, then a introduction to the structure tensor of the extended Krylov subspace defined for the matrix case is established afterwards. We follow an Arnoldi process based on a global and block Krylov subspace to construct the key tool of theses projection methods, which is the associated orthonormal tensor basis denoted $\ctV_m$ ({\it in the classical tensor Krylov subspace case}) or $\ctV_{2m}$ ({\it in the extended tensor Krylov subspace case}),   Application of these methods to MLTI model order reduction has been presented as well as the solution of the discrete Lyapunov tensor equation. From both methods, and based on the two processes, global and block process, we have derived an expression between the original tensor transfer function that describes the input-output behaviour of the MLTI system and its approximation. Concerning the approximate solution to some discrete Lyapunov tensor equation, some theorems have been proposed to simplify the expression of the residual which yields to a proper computation of the residual tensorial norm. This latter is used to stop the iterations of the proposed Algorithms. Numerical results were  finally reported  to confirm the performance of the proposed methods.

\bibliographystyle{siam}
\bibliography{ML_MOR_arxiv_ver}
\end{document}